\documentclass[a4paper,12pt]{article}
\usepackage{amsmath}
\usepackage{amssymb}
\usepackage{amsthm}
\usepackage[english]{babel}
\usepackage{hyperref}

\newtheorem{theor}{Theorem}[section]
\newtheorem{defi}[theor]{Definition}
\newtheorem{lemma}[theor]{Lemma}

\newcommand{\Sign}{\mathrm{Sign}}
\newcommand{\Irr}{\mathrm{Irr}}

\begin{document}

\begin{center}
{\Large 
Sign conjugacy classes of the alternating groups}

Lucia Morotti
\end{center}

\begin{abstract}

A conjugacy class $C$ of a finite group $G$ is a sign conjugacy class if every irreducible character of $G$ takes value 0, 1 or -1 on $C$. In this paper we classify the sign conjugacy classes of alternating groups.

\end{abstract}

\section{Introduction}

We will begin this paper by giving the definition of sign conjugacy class for an arbitrary finite group.

\begin{defi}
Let $G$ be a finite group. A conjugacy class of $G$ is a sign conjugacy class of $G$ if every irreducible character of $G$ takes values 0, 1 or -1 on $C$.
\end{defi}

In \cite{b7} Olsson considered sign conjugacy classes of $S_n$ in order to answer a question from Isaacs and Navarro for $S_n$, using the following property of sign conjugacy classes:

\begin{theor}
Let $C$ be a sign conjugacy class of a finite group $G$ and define $\chi^+$ and $\chi^-$ by
\[\chi^+:=\sum_{{\chi\in\Irr(G):}\atop{\chi(C)>0}}\chi\mbox{\hspace{24pt}and\hspace{24pt}}\chi^-:=\sum_{{\chi\in\Irr(G):}\atop{\chi(C)<0}}\chi.\]
Then $\chi^+$ and $\chi^-$ are characters of $G$ differing only on $C$.
\end{theor}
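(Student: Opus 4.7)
The plan is to split the statement into two parts. First, since $\chi^+$ and $\chi^-$ are each defined as a sum of distinct irreducible characters of $G$ with coefficient one, both are automatically characters of $G$, being non-negative integer combinations of irreducibles. The actual content of the theorem therefore lies in showing that $\chi^+(g)=\chi^-(g)$ for every $g\notin C$.

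To handle that, I would consider the difference $\chi^+-\chi^-$ and rewrite it. Because $C$ is a sign conjugacy class, for each $\chi\in\Irr(G)$ and any fixed $c\in C$ one has $\chi(c)\in\{-1,0,1\}$, so $\chi(c)=\Sign(\chi(c))$. This immediately gives the key identity
\[\chi^+-\chi^-=\sum_{\chi\in\Irr(G)}\chi(c)\,\chi,\]
since irreducibles vanishing on $C$ contribute zero to both sides and the $\pm 1$ coefficients on the left are recovered from the right because the only possible nonzero values of $\chi(c)$ are $\pm 1$.

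Evaluating this identity at an arbitrary $g\in G$ and using that $\chi(c)\in\mathbb R$, so $\overline{\chi(c)}=\chi(c)$, the right-hand side becomes $\sum_{\chi\in\Irr(G)}\overline{\chi(c)}\,\chi(g)$, which is precisely what appears in the column orthogonality relation. By that relation it equals $0$ when $g\notin C$ and equals $|C_G(c)|\neq 0$ when $g\in C$. Hence $\chi^+$ and $\chi^-$ agree off $C$ and differ on $C$, as required.

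There is no genuinely difficult step here: once one notices that the hypothesis on $C$ allows the sign function to be replaced by the character value itself, the proof reduces to column orthogonality. The interest of the statement is not its difficulty but the fact that it packages a nontrivial pair of \emph{characters} (not merely virtual characters) differing only on $C$, which is what makes sign conjugacy classes a useful tool in representation-theoretic problems such as the one considered by Olsson in \cite{b7}.
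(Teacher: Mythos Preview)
Your argument is correct: the key observation that $\chi(c)\in\{0,\pm1\}$ lets you replace the signs by the actual character values, after which the second orthogonality relation does all the work. Note that the paper does not give its own proof of this theorem; it is stated there as a known property quoted from Olsson \cite{b7}, and your proof via column orthogonality is the standard one.
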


In \cite{b7} Olsson also formulated a conjecture about sign conjugacy classes of $S_n$ which was proven in \cite{m2}. In this paper we will classify sign conjugacy classes of $A_n$, proving that they are closely related to those of $S_n$, as would be to expect due to the relationship of irreducible characters of $S_n$ and $A_n$.

As we will be working with alternating (and symmetric) groups, we will refer to partitions instead of conjugacy classes. This leads to the following two definitions:

\begin{defi}
A partition $\gamma$ is a sign partition if it is the cycle partition of a sign conjugacy class of $S_n$.

If $\gamma$ is an even partition then $\gamma$ is an $A_n$-sign partition if it is the cycle partition of a sign conjugacy class of $A_n$.
\end{defi}

For example $(3)$ is a sign partition which is not an $A_n$-sign partition, while $(2,2)$ is an $A_n$-sign partition which is not a sign partition. In general however $A_n$-sign partitions are almost always sign partitions and almost all even sign partitions are $A_n$ sign partitions, as will be seen from Theorem \ref{t3} and Theorem 1.3 of \cite{m2}.

For $\gamma$ consisting of odd distinct parts it is clear that $C_{\gamma,+}$ is a sign conjugacy class of $A_n$ if and only if $C_{\gamma,-}$ is also such a conjugacy class (later it will be proved that no such conjugacy class is a sign conjugacy class). In particular any conjugacy class of $A_n$ is a sign conjugacy class exactly when its cycle partition is an $A_n$-sign partition.

In order to describe $A_n$-sign partitions we will need the two following sets of partitions:

\begin{defi}
We define $\Sign$ to be the subsets of partitions consisting of all partitions $(\gamma_1,\ldots,\gamma_r)$ for which there exists an $s$, $0\leq s\leq r$ such that the following holds:
\begin{itemize}
\item
$\gamma_i>\gamma_{i+1}+\ldots+\gamma_r$ for $1\leq i\leq s$,

\item
$(\gamma_{s+1},\ldots,\gamma_r)$ is one of the following partitions:
\begin{itemize}
\item
$()$, $(1,1)$, $(3,2,1,1)$ or $(5,3,2,1)$,

\item
$(a,a-1,1)$ with $a\geq 2$,

\item
$(a,a-1,2,1)$ with $a\geq 4$,

\item
$(a,a-1,3,1)$ with $a\geq 5$.
\end{itemize}
\end{itemize}
\end{defi}

\begin{defi}
We define $\overline{\Sign}$ by:
\begin{eqnarray*}
\overline{\Sign}&:=&\{(1,1,1),(2,2),(2,2,1),(5,4,3,2,1)\}\\
&&\hspace{6pt}\cup\{(a,a-1,4,1):a\geq 6\}\\
&&\hspace{6pt}\cup\{(a,a-3,2,1,1):a=6\mbox{ or }a\geq 8\}\\
&&\hspace{6pt}\cup\{(a,a-5,3,2,1):9\leq 4\leq 10\mbox{ or }a\geq 12\}\\
&&\hspace{6pt}\cup\{(a,b,a-b+1):b+1\leq a\leq 2b-2\}\\
&&\hspace{6pt}\cup\{(a,b,a-b-1,1):b+2\leq a\leq 2b\}.
\end{eqnarray*}
\end{defi}

In \cite{m2} (Theorem 1.3) it was proven that a partition $\gamma$ is a sign partition if and only if $\gamma\in\Sign$, proving a conjecture of Olsson from \cite{b7}. In this paper we will prove the following characterization of $A_n$-sign conjugacy classes.

\begin{theor}\label{t3}
Let $\gamma$ be a partition of $n\geq 2$. Then $\gamma$ is an $A_n$-sign partition if and only if
\[\gamma\in(\Sign\cup\overline{\Sign})\cap\{\mbox{even partitions not consisting of odd distinct parts}\}.\]
\end{theor}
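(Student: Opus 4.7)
The plan is to translate the theorem into pointwise conditions on the ordinary $S_n$-character values $\chi^\lambda(\gamma)$, then combine Theorem~1.3 of \cite{m2} with an analysis of those self-conjugate $\lambda$ for which $|\chi^\lambda(\gamma)|=2$.

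I would first dispose of the case where $\gamma$ has distinct odd parts; here the $S_n$-class splits into two $A_n$-classes $C_{\gamma,\pm}$, and for the self-conjugate partition $\lambda$ attached to $\gamma$ under the hook bijection, the two constituents of $\chi^\lambda\downarrow_{A_n}$ take values $\tfrac{1}{2}(\chi^\lambda(\gamma)\pm\sqrt{\epsilon\prod_i\gamma_i})$ for some $\epsilon\in\{\pm 1\}$ depending on $n$. A short check (irrationality when $\prod_i\gamma_i$ is not a square, direct computation in the few cases when it is) shows that these values never all lie in $\{-1,0,1\}$. For $\gamma$ even and not consisting of distinct odd parts, the $S_n$-class is a single $A_n$-class, on which $\chi^\lambda\downarrow_{A_n}$ takes value $\chi^\lambda(\gamma)$ when $\lambda\ne\lambda'$, while $\chi^\lambda_+$ and $\chi^\lambda_-$ each take the common value $\tfrac{1}{2}\chi^\lambda(\gamma)$ when $\lambda=\lambda'$. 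Hence $\gamma$ is an $A_n$-sign partition if and only if
\[
\mathrm{(a)}\;\;|\chi^\lambda(\gamma)|\le 1\text{ whenever }\lambda\ne\lambda',\qquad
\mathrm{(b)}\;\;\chi^\lambda(\gamma)\in\{-2,0,2\}\text{ whenever }\lambda=\lambda'.
\]

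Since Theorem~1.3 of \cite{m2} characterises $\Sign$ as the set of $\gamma$ with $|\chi^\lambda(\gamma)|\le 1$ for every $\lambda$, the theorem reduces to two sub-statements: (I) for every even, non-distinct-odd $\gamma\in\Sign$, $\chi^\lambda(\gamma)=0$ for every self-conjugate $\lambda$; and (II) the partitions $\gamma\notin\Sign$ satisfying (a) and (b) are exactly the even, non-distinct-odd members of $\overline{\Sign}$. Claim (I) I would prove via the Murnaghan--Nakayama rule: the defining inequalities $\gamma_i>\gamma_{i+1}+\cdots+\gamma_r$ together with the very restricted shape of the tails $(\gamma_{s+1},\ldots,\gamma_r)$ appearing in the definition of $\Sign$ force either the absence of any rim hook of length $\gamma_1$ inside a given self-conjugate $\lambda$, or pairs of such rim hooks whose leg-length signs cancel; the sporadic tails and small families would be handled by direct computation.

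The main obstacle is (II), which requires the combinatorial enumeration of \cite{m2} to be reworked with the weakened bound $|\chi^\lambda(\gamma)|\le 2$ now permitted on self-conjugate $\lambda$. I would induct on $n$, peel off the largest part $\gamma_1$ via Murnaghan--Nakayama, and classify the reduced partitions for which (a) and (b) still survive; showing that precisely the sporadic entries and the four infinite families of $\overline{\Sign}$ arise---and that no further shapes do---is the technical heart of the proof and presumably occupies the bulk of the paper.
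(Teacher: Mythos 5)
Your reduction is sound and matches the paper's underlying framework: for $\gamma$ even and not of distinct odd type the class does not split, so $\gamma$ is an $A_n$-sign partition iff $\chi^\lambda(\gamma)\in\{0,\pm1\}$ for $\lambda\neq\lambda'$ and $\chi^\lambda(\gamma)\in\{0,\pm2\}$ for $\lambda=\lambda'$; and your treatment of the distinct-odd-parts case via the split characters is essentially the paper's Section on that case. Two remarks on (I): it is not something you need the Murnaghan--Nakayama rule for, and your proposed cancellation argument would be hard to push through. Since $\chi^\lambda_\pm(\gamma)=\tfrac12\chi^\lambda(\gamma)$ is a character value and hence an algebraic integer, $\gamma\in\Sign$ already forces $\chi^\lambda(\gamma)\in\{0,\pm1\}\cap 2\mathbb{Z}=\{0\}$ for self-conjugate $\lambda$; this one-line integrality observation is exactly how the paper disposes of the $\Sign$ part.

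The genuine gap is (II), which you correctly identify as the heart of the matter but for which you offer only the instruction ``induct on $n$, peel off $\gamma_1$, and classify.'' That is a restatement of the problem, not an argument, and it is where all the content of the paper lives. Concretely, what is missing is: (i) a proof that an $A_n$-sign partition has no repeated parts outside a short explicit list (the paper's Lemma on repeated parts, which requires checking that the witness partitions from Olsson's work and from \cite{m2} are not self-conjugate, with separate constructions in the cases where they are); (ii) the inductive step showing every proper tail $(\gamma_i,\ldots,\gamma_r)$ lies in $\Sign$, which needs witnesses $\beta$ with $h^\beta_{2,1}=\gamma_i$ so that prepending the larger parts preserves both the character value and non-self-conjugacy; (iii) a case-by-case revisit of the witness partitions $\beta$ constructed in the proof of Theorem 1.6 of \cite{m2} to determine exactly when they are forced to be self-conjugate --- this is precisely the computation that produces the list $\overline{\Sign}$, including a sporadic case $\alpha=(\alpha_2+2a-1,\alpha_2,a,a-1,1)$ where a new witness must be built from scratch; and (iv) for each of the infinite families in $\overline{\Sign}$, an explicit verification, via $\gamma_1$-cores and quotients and the fact that $|\gamma|<3\gamma_1$ bounds the number of $\gamma_1$-hooks, that every non-self-conjugate $\beta$ with two $\gamma_1$-hooks satisfies $\chi^\beta_\gamma\in\{0,\pm1\}$. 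Without (iii) you cannot even predict the set $\overline{\Sign}$, let alone prove that no further shapes occur, so the proposal as it stands does not establish the theorem.
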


In order to prove Theorem \ref{t3} we will use the following lemmas.

\begin{lemma}\label{l1}
Let $\gamma$ be an $A_n$-sign partition. If $\gamma\not\in\{(1,1,1),(2,2),(2,2,1)\}$ then $\gamma$ does not have repeated parts, except possibly for the part 1, which may have multiplicity 2.
\end{lemma}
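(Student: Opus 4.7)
Because $\gamma$ has a repeated part it cannot consist of distinct odd parts, so the $S_n$-conjugacy class of $\gamma$ does not split in $A_n$. The irreducible characters of $A_n$ therefore take, on this class, the values $\chi^\lambda(\gamma)$ for non-self-conjugate $\lambda$ and the values $\chi^\lambda(\gamma)/2$ for self-conjugate $\lambda$. Hence $\gamma$ is an $A_n$-sign partition if and only if $|\chi^\lambda(\gamma)|\leq 1$ for every non-self-conjugate $\lambda\vdash n$ and $|\chi^\lambda(\gamma)|\leq 2$ for every self-conjugate $\lambda\vdash n$. I would argue the contrapositive: assuming $\gamma\notin\{(1,1,1),(2,2),(2,2,1)\}$ has either the part $1$ with multiplicity $\geq 3$ or some part $\geq 2$ appearing at least twice, I would exhibit a $\lambda$ violating one of these bounds.

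\textbf{Case 1: $m_1(\gamma)\geq 3$.} The identity $\chi^{(n-1,1)}(g)=\mathrm{fix}(g)-1$ gives $\chi^{(n-1,1)}(\gamma)=m_1(\gamma)-1\geq 2$. The shape $(n-1,1)$ is self-conjugate only when $n=3$, in which case the only possibility is $\gamma=(1,1,1)$ (excluded). For $n\geq 4$ we thus obtain a non-self-conjugate witness and the required contradiction.

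\textbf{Case 2: some part $a\geq 2$ is repeated.} By Theorem 1.3 of \cite{m2}, $\gamma\notin\Sign$, so some $|\chi^\lambda(\gamma)|\geq 2$. I need to strengthen this to either a non-self-conjugate $\lambda$ with $|\chi^\lambda(\gamma)|\geq 2$ or a self-conjugate $\lambda$ with $|\chi^\lambda(\gamma)|\geq 4$. As a model case, for $\gamma=(a,a)$ with $a\geq 3$ a Murnaghan-Nakayama calculation gives $\chi^{(a,a)}(\gamma)=2$, and $(a,a)$ has transpose $(2^a)\neq(a,a)$, so is not self-conjugate; this rules out such $\gamma$. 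For a general $\gamma$ with repeated part $a\geq 2$, I would take $\lambda$ obtained by prepending the remaining parts of $\gamma$ onto an $(a,a)$-block in a compatible way, and compute $\chi^\lambda(\gamma)$ by peeling the ``extra'' parts via rim-hook removal so that the calculation reduces to the $(a,a)$-model computation; non-self-conjugacy is then read off directly from the shape. The partitions $(2,2)$ and $(2,2,1)$ are genuine exceptions because for them every witness is forced to be self-conjugate with value exactly $\pm 2$; this can be confirmed by direct inspection of the character tables of $A_4$ and $A_5$.

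\textbf{Main obstacle.} The bulk of the work lies in Case~2: one has to enumerate the possible shapes of $\gamma$ with a repeated part $\geq 2$, and in each produce a non-self-conjugate witness (or, failing that, a self-conjugate witness of absolute value $\geq 4$). The delicate point is isolating precisely $(2,2)$ and $(2,2,1)$ as the cases where every available witness happens to be self-conjugate with value exactly $\pm 2$, since for these two partitions the halving forced by self-conjugacy exactly lands in the sign range.
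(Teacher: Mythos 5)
Your reduction to symmetric group character values is sound in the direction you actually use: since a repeated part prevents the class from splitting, a non-self-conjugate $\lambda$ with $|\chi^\lambda_\gamma|\geq 2$ does show that $\gamma$ is not an $A_n$-sign partition. (Your stated ``if and only if'' is not quite right, though: a self-conjugate $\lambda$ with $\chi^\lambda_\gamma=\pm 1$ would also be a violation, since the split characters would take the value $\pm 1/2$; this does not affect your contrapositive.) Your Case 1 is complete and is in fact a cleaner treatment of the multiplicity of the part $1$ than the paper's: the witness $(n-1,1)$ with $\chi^{(n-1,1)}_\gamma=m_1(\gamma)-1\geq 2$, non-self-conjugate for $n\geq 4$, settles that case in two lines.

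The genuine gap is Case 2, which you acknowledge but do not close, and it is where essentially all of the content of the lemma lives. ``Prepending the remaining parts onto an $(a,a)$-block and peeling'' is not an argument: the Murnaghan--Nakayama rule gives a signed sum over \emph{all} rim hooks of the prescribed length, the repeated part $a$ need not sit at the bottom of $\gamma$, and one must control cancellation at every peeling step. This is precisely the nontrivial content of Lemma 6 and Theorem 7 of \cite{b7}, which the paper invokes for the existence of a witness $\beta$ with $\chi^\beta_\gamma\notin\{0,\pm 1\}$; the new work in the paper's proof is then (i) checking, via the classification of self-conjugate partitions of the form $(a,b,1^c)$ (Lemma \ref{l3}), that Olsson's witnesses are never self-conjugate, and (ii) supplying replacement witnesses such as $(a+1,a)$, $(n-2,1^2)$, $(n-3,3)$, $(n-3,1^3)$ in the exceptional configurations $\gamma=(a,a,1)$ and $\gamma=(\delta_1,\ldots,\delta_t,\mu)$ with $\mu\in\{(2^m,1),(2^m,1^2),(3^m,2,1),(3^m,2,1^2),(3^m,1),(3^m,1^2)\}$, where the generic construction is unavailable or could produce self-conjugate shapes. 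Your sketch identifies neither the concrete witnesses nor these exceptional families, so the claim that ``non-self-conjugacy is then read off directly from the shape'' has nothing to be read off from; as written, Case 2 restates what must be proved rather than proving it.
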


\begin{lemma}\label{t2}
Let $\alpha=(\alpha_1,\ldots,\alpha_h)$ be a partition with $h\geq 3$. Assume that $\alpha_1>\alpha_2$, that $\alpha\not\in\Sign$ and that $(\alpha_2,\ldots,\alpha_h)\in\Sign$. Then if $\alpha\not=(5,4,3,2,1)$ we can find a partition $\beta$ of $|\alpha|$ such that $\chi^\beta_\alpha\not\in\{0,\pm 1\}$ and $h_{2,1}^\beta=\alpha_1$.

If $\alpha\not\in\overline{\Sign}$ we can choose $\beta$ to be not self conjugate.
\end{lemma}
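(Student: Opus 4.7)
The plan is to construct the witness $\beta$ explicitly via case analysis on the structure of the tail $\tau := (\alpha_2, \ldots, \alpha_h)$, which by hypothesis lies in $\Sign$ and therefore admits the decomposition of Definition 1.4 with some value $s$. The constraint $h_{2,1}^\beta = \alpha_1$, equivalently $\beta_2 + \ell(\beta) = \alpha_1 + 2$, forces $\beta$ to be close to a hook. The Murnaghan--Nakayama rule then expresses
\[
\chi^\beta_\alpha \;=\; \sum_{\xi} (-1)^{\mathrm{leg}(\xi)}\, \chi^{\beta\setminus\xi}_{\tau},
\]
the sum being over rim $\alpha_1$-hooks $\xi$ in $\beta$. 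Since $\tau \in \Sign$, every summand lies in $\{0,\pm 1\}$, so the only way to force $|\chi^\beta_\alpha| \geq 2$ is to arrange that $\beta$ admits at least two rim $\alpha_1$-hooks whose residual character values on $\tau$ do not cancel.

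I would first treat the principal case $\alpha_1 \leq \alpha_2 + \cdots + \alpha_h$, which is the generic reason for $\alpha \notin \Sign$. A natural family of trial shapes are the near-hooks $\beta = (\alpha_1 - k + 1,\,2,\,1^{k-1})$ and small variants parametrized by $\tau$'s decomposition; removing a rim $\alpha_1$-hook either strips the top row or peels down the first column, leaving at most two candidate residuals of explicit shape. Sub-casing on the form of $\tau$---the three finite exceptions $(1,1)$, $(3,2,1,1)$, $(5,3,2,1)$; the three one-parameter families $(a,a-1,1)$, $(a,a-1,2,1)$, $(a,a-1,3,1)$; and the case $s \geq 1$ where $\tau$ has one or more leading ``big'' parts over such a base---I would compute $\chi^{\beta\setminus\xi}_{\tau}$ in each instance, adjusting the second-row length or the leg of $\beta$ when necessary to break cancellation while preserving the hook condition.

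The stronger assertion that $\beta$ can be taken not self-conjugate whenever $\alpha \notin \overline{\Sign}$ is what dictates the precise list defining $\overline{\Sign}$: these are exactly the $\alpha$ for which the only witnesses $\beta$ with $h_{2,1}^\beta = \alpha_1$ turn out to be self-conjugate. For each subcase above I would therefore exhibit a second, manifestly non-self-conjugate candidate $\beta'$ (typically obtained from the first by transposition or by a small shape perturbation) and verify that it still yields $|\chi^{\beta'}_\alpha| \geq 2$ outside the listed exceptions; conversely, for $\alpha \in \overline{\Sign}$ a short direct check confirms that every suitable $\beta$ is in fact self-conjugate. The main obstacle is the bookkeeping for this dichotomy: the six families of $\overline{\Sign}$ together with the excluded case $(5,4,3,2,1)$ correspond to boundary or degenerate values of the parameters in the infinite families for $\tau$, and at these boundaries the generic non-self-conjugate candidate collapses or accidentally coincides with its own conjugate, forcing a case-by-case substitution. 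Apart from this tracking, each individual verification is an elementary Murnaghan--Nakayama calculation.
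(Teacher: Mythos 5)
Your outline identifies the right general shape of the argument --- near-hook witnesses $\beta$ with $h_{2,1}^\beta=\alpha_1$, Murnaghan--Nakayama with two rim $\alpha_1$-hooks whose residual values on $\tau=(\alpha_2,\ldots,\alpha_h)$ fail to cancel, and a case split following the decomposition of $\tau\in\Sign$ --- but as written it is a plan rather than a proof, and the plan has concrete defects. First, your trial family $\beta=(\alpha_1-k+1,2,1^{k-1})$ is a partition of $\alpha_1+2$, not of $|\alpha|$: imposing $h_{2,1}^\beta=\alpha_1$ on a shape $(c,2,1^{k-1})$ forces $k=\alpha_1-1$ and then $c$ must be $|\alpha|-\alpha_1$, so the correct family is $(|\alpha|-\alpha_1,b,1^{\alpha_1-b})$ with the free parameter in the \emph{second} row, not the first. (Note also that since $\tau\in\Sign$, the hypothesis $\alpha\notin\Sign$ already forces $\alpha_1\le\alpha_2+\cdots+\alpha_h$; that is not merely the ``generic'' case but the only one.) Second, and more seriously, your mechanism for securing a non-self-conjugate witness when $\alpha\notin\overline{\Sign}$ --- replacing a self-conjugate $\beta$ by its transpose or a small perturbation --- cannot work: the transpose of a self-conjugate partition is itself, and a perturbation must simultaneously preserve $|\beta|=|\alpha|$, $h_{2,1}^\beta=\alpha_1$, and $|\chi^\beta_\alpha|\ge 2$, which is exactly the nontrivial part. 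The paper's proof shows this is a real obstruction: for $\alpha=(\alpha_2+2a-1,\alpha_2,a,a-1,1)$ with $\alpha_2>2a$ and $a\ge 4$ the generic witness is self-conjugate, and one must produce a genuinely different partition, $\beta=(\alpha_2+2a,3,1^{\alpha_2+2a-4})$, and redo the Murnaghan--Nakayama computation from scratch to get $\chi^\beta_\alpha=(-1)^{\alpha_2}2$. Nothing in your proposal anticipates or handles such cases.

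Beyond these points, the entire content of the lemma lives in the case-by-case verifications you defer: which $b$ works for which form of $\tau$, the evaluation of each residual character, and the precise identification of the parameter values at which the witness degenerates (these are what produce the list $\overline{\Sign}$ and the exception $(5,4,3,2,1)$). The paper does not redo this from scratch; it imports the witnesses from the proof of Theorem 1.6 of \cite{m2} and reduces the new work to a self-conjugacy test via the criterion that $(a,b,1^c)$ is self-conjugate only if $c=a-2$ and $b\in\{1,2\}$ (Lemma \ref{l3}), plus the one exceptional family above. Your route would instead require reproving that theorem of \cite{m2} in full. You also propose to verify the converse (that for $\alpha\in\overline{\Sign}$ \emph{every} suitable $\beta$ is self-conjugate), which is not needed for the statement and would require quantifying over all $\beta$ with $h_{2,1}^\beta=\alpha_1$ rather than exhibiting one. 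As it stands the proposal cannot be accepted as a proof.
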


These two lemmas will allow us to prove in Section \ref{s5} that, if $\gamma=(\gamma_1,\ldots,\gamma_r)$ is an $A_n$-sign partition, then $(\gamma_i,\ldots,\gamma_r)\in\Sign$ for $i\geq 2$ and $\gamma\in\Sign\cup\overline{\Sign}$, by otherwise constructing partitions $\delta$ which are not self conjugate and which satisfy $\chi^\delta_\gamma\not\in\{0,\pm 1\}$, contradicting then the assumption that $\gamma$ is an $A_n$-partition. In Section \ref{s6} we will prove that partitions consisting of odd distinct parts are not $A_n$-sign partitions (by simply looking at the characters of $A_n$ indexed by the corresponding self conjugate partition), proving then one direction of Theorem \ref{t3}. The other direction will be proved in Section \ref{s7}. Through all of the following $\gamma$ will be a partition of $n\geq 2$.

From Theorem 1.3 of \cite{m2} and from Theorem \ref{t3} we also easily obtain that if $\chi^\delta_\gamma\not\in\{0,\pm 1\}$ and $\gamma\in\overline{\Sign}$ does not consist of distinct odd parts, then $\delta$ is self conjugate. This can be proved also if $\gamma\in\overline{\Sign}$ consist of distinct odd parts, using arguments similar to those from Section \ref{s7}.

Proofs of results about irreducible characters of $S_n$ and $A_n$ and about weights, cores and quotients of partitions can be found in \cite{b1} and \cite{b4}.

\section{Proof of Lemmas \ref{l1} and \ref{t2}}

We will now prove Lemmas \ref{l1} and \ref{t2}. To do this we will use results from \cite{m2} and \cite{b7}. Since most of the partitions considered there are of the form $(a,b,1^c)$ we will first classify in the next lemma which such partitions are self conjugate. In the following if $\beta$ is a partition we will write $\beta'$ for its conjugate.

\begin{lemma}\label{l3}
A partition $(a,b,1^c)$ of $n\geq 2$ is self conjugate if and only if $c=a-2$ and $b\in\{1,2\}$. In particular if $(a,b,1^c)$ is self conjugate then $b+c\in\{a-1,a\}$.
\end{lemma}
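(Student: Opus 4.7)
The plan is to compute the conjugate of $(a,b,1^c)$ explicitly by reading off column lengths in its Young diagram, then compare with the original partition. The shape has one row of length $a$ at the top, one row of length $b$ just below (with $b\le a$), and $c$ further rows of length $1$, so every row contributes to the first column, giving it length $c+2$. Beyond the first column, the picture depends on whether $b=1$ or $b\ge 2$, so I would split into those two cases.

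If $b\ge 2$, then columns $2$ through $b$ each receive a cell from rows $1$ and $2$ (and nothing from the length-$1$ rows), while columns $b+1$ through $a$ receive only the cell from row $1$. This yields
\[(a,b,1^c)' \;=\; (c+2,\;2^{b-1},\;1^{a-b}).\]
If $b=1$, there is no second row of length $\ge 2$, and the conjugate collapses to $(c+2,1^{a-1})$, which is the $b=1$ specialisation of the formula above.

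I would then match the two partitions componentwise. For the second part to agree, the conjugate's second part (which is $2$ if $b\ge 3$, and $1$ or $2$ according to $b=1,2$) must equal $b$; this immediately forces $b\in\{1,2\}$ (the case $b\ge 3$ is ruled out because the conjugate would have second part $2\ne b$). Given $b\in\{1,2\}$, matching the first parts gives $a=c+2$, i.e. $c=a-2$, and one checks that with this choice the remaining parts agree as well, so $(a,1,1^{a-2})$ and $(a,2,1^{a-2})$ really are self-conjugate. Conversely, any self-conjugate $(a,b,1^c)$ must satisfy both conditions.

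The final ``in particular'' statement is then immediate: when $b=1,\,c=a-2$ we have $b+c=a-1$, and when $b=2,\,c=a-2$ we have $b+c=a$. There is no substantial obstacle here; the whole argument is a direct diagrammatic computation and a case split on $b$, and the only thing to be careful about is handling the $b=1$ case separately so the ``$2^{b-1}$'' block in the conjugate formula is interpreted correctly.
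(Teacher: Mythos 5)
Your proof is correct and follows essentially the same route as the paper's: the key point in both is that the second part of the conjugate is at most $2$ (the paper phrases this as $\beta_2'\leq 2$), which forces $b\in\{1,2\}$, after which the condition $c=a-2$ drops out by comparing first parts. The only detail you omit is the degenerate case $b=0$ (the partition $(a)$ itself), which the paper dismisses in one line since $a=n\geq 2$; otherwise your explicit computation of the conjugate $(c+2,2^{b-1},1^{a-b})$ is just a slightly more verbose version of the same diagrammatic argument.
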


\begin{proof}
Let $\beta:=(a,b,1^c)$. If $b=0$ clearly $\beta$ is not self conjugate, as $a=n\geq 2$. So assume that $b\geq 1$. From $\beta_2'\leq 2$ it follows that if $\beta$ is self conjugate then $b\in\{1,2\}$. As clearly $(a,1^{c+1})$ and $(a,2,1^c)$ are self conjugate if and only if $c=a-2$ the lemma follows.
\end{proof}

We will now prove Lemma \ref{l1}.

\begin{proof}[Proof of Lemma \ref{l1}]
Assume that $\gamma$ has a repeated part (with multiplicity at least 3 if this part is 1).

If $\gamma\not\in\{(1,1,1),(2,2)\}$ and $\gamma$ is not of the form $(a,a,1)$ for $a\geq 4$ or of the form $(\delta_1,\ldots,\delta_t,\mu_1,\ldots,\mu_v)$ with $\delta_t>\mu_1$ and
\[\mu\in\{(2^m,1),(2^m,1^2),(3^m,2,1),(3^m,2,1^2),(3^m,1),(3^m,1^2):m\geq 2\},\]
we can find from the proof of Lemma 6 and Theorem 7 of \cite{b7} a partition $\beta$ with $\chi^\beta_\gamma\not=\{0,\pm1\}$ which, from Lemma \ref{l3}, is not self conjugate. It then easily follows that $\gamma$ is not an $A_n$-sign partition. In the not covered cases let
\[\beta=\left\{\begin{array}{ll}
(a+1,a),&\gamma=(a,a,1),\\
(n-2,1^2),&\mu=(2^m,1)\mbox{ or }(2^m,1^2),\\
(n-3,3),&\mu=(3^m,2,1),(3^m,2,1^2)\mbox{ or }(3^m,1),\\
(n-3,1^3),&\mu=(3^m,1^2),
\end{array}\right.\]
where $n=|\gamma|$. It can be easily checked using Lemma \ref{l3} that the above partitions are not self-adjoint unless $\gamma=(2,2,1)$. Also $\chi^\beta_\gamma\not\in\{0,\pm1\}$, for example
\[\chi^{(n-2,1^2)}_{(\delta_1,\ldots,\delta_t,2^m,1)}=\chi^{(2m-1,1^2)}_{(2^m,1)}=-(m-1)\chi^{(3)}_{(2,1)}+\chi^{(1^3)}_{(2,1)}=-m.\]
The lemma then follows.
\end{proof}

We will now prove Lemma \ref{t2}. Most of the work will be in proving that if $\alpha\not\in\overline{\Sign}$ then the partitions $\beta$ constructed in the proof of Theorem 1.6 of \cite{m2} are not self-adjoint. This is always the case apart for $\alpha=(\alpha_2+2a-1,\alpha_2,a,a-1,1)$ with $\alpha_2>2a$ and $a\geq 4$, which will be treated separately.

\begin{proof}[Proof of Lemma \ref{t2}]
We will divide the proof of the lemma in the following cases: 1) $\alpha=(\alpha_2+2a-1,\alpha_2,a,a-1,1)$ with $\alpha_2>2a$ and $a\geq 4$, 2) all other cases. Case 2) will be divided in subcases corresponding to the different cases of the proof of Theorem 1.6 of \cite{m2}.

1) For $\alpha=(\alpha_2+2a-1,\alpha_2,a,a-1,1)$ with $\alpha_2>2a$ and $a\geq 4$ let $\beta:=(\alpha_2+2a,3,1^{\alpha_2+2a-4})$. Since $a\geq 4$ we have that $\beta$ is a partition. Also $h_{2,1}^\beta=\alpha_2+2a-1=\alpha_1$ and $\beta$ is not self-adjoint from Lemma \ref{l3}. As
\[h_{1,3}^\beta=\alpha_2+2a+2-3=\alpha_1,\]
as $a\geq 4$ so that
\[\alpha_2+2a-4>\alpha_2\]
and as any partition of $\alpha_2+2a$ has at most one $\alpha_2$-hooks since $\alpha_2>2a$, we have
\begin{eqnarray*}
\chi^\beta_\alpha&=&(-1)^{\alpha_2+2a-4}\chi^{(\alpha_2+2a)}_{(\alpha_2,a,a-1,1)}-\chi^{(2,2,1^{\alpha_2+2a-4})}_{(\alpha_2,a,a-1,1)}\\
&=&(-1)^{\alpha_2+2a-4}-(-1)^{\alpha_2-1}\chi^{(2,2,1^{2a-4})}_{(a,a-1,1)}\\
&=&(-1)^{\alpha_2+2a-4}-(-1)^{\alpha_2-1+a-1}\chi^{(2,2,1^{a-4})}_{(a-1,1)}\\
&=&(-1)^{\alpha_2+2a-4}-(-1)^{\alpha_2-1+a-1+a-3}\chi^{(1)}_{(1)}\\
&=&(-1)^{\alpha_2}2.
\end{eqnarray*}
So the lemma holds in this case.

2) In each of the following cases $\beta$ is as constructed in \cite{m2} for the corresponding case. Notation used here is as in \cite{m2}.

\begin{itemize}
\item
For
\begin{eqnarray*}
(\alpha_2,\ldots,\alpha_h)&\in&\{(1,1),(3,2,1,1),(5,3,2,1)\}\\
&&\hspace{6pt}\cup\{(a,a-1,1):2\leq a\leq 4\}\\
&&\hspace{6pt}\cup\{(a,a-1,2,1):4\leq a\leq 8\}\\
&&\hspace{6pt}\cup\{(a,a-1,3,1):5\leq a\leq 10\}
\end{eqnarray*}
the lemma can be checked by looking at each single case separately (they are finitely many since $\alpha_1\leq \alpha_2+\ldots+\alpha_h$).

\item
For
\begin{eqnarray*}
(\alpha_2,\ldots,\alpha_h)&\in&\{(a,a-1,1):a\geq 5\}\\
&&\hspace{6pt}\cup\{(a,a-1,2,1):a\geq 9\}\\
&&\hspace{6pt}\cup\{(a,a-1,3,1):a\geq 11\}
\end{eqnarray*}
we can apply Lemma \ref{l3} to results from Section 2 of \cite{m2} and, as $\alpha_1>\alpha_2=a\geq 5$, obtain that if $\beta$ is self conjugate then $\alpha$ is either $(2a,a,a-1,1)$ or $(a+1,a,a-1,1)$, as in all other cases $\beta=(|\alpha|-\alpha_1,b,1^{\alpha_1-b})$ with $|\alpha|-\alpha_1\geq \alpha_1+2$ or $b\geq 4$.

If $\alpha=(a+1,a,a-1,1)$ then $\beta=(a-1,a-1,a-1,4)$ is self conjugate if and only if $a=5$, that is $\alpha=(6,5,4,1)\in\overline{\Sign}$.

If $\alpha=(2a,a,a-1,1)=(2a,a,2a-a-1,1)$. As $a\geq 5$ we have that $\alpha\in\overline{\Sign}$.

\item
For $\alpha$ as in Theorems 3.1 or 3.2 of \cite{m2} we have from Lemma \ref{l3} that if $\beta$ is self conjugate then $x=1$ and $|\alpha|-\alpha_1=\alpha_1$. From $x=1$ we have that $k=h$ and $\alpha_h=1$. From $|\alpha|-\alpha_1=\alpha_1$ we have that $\alpha_1=\alpha_2+\ldots+\alpha_h$, that is $\alpha_1-\alpha_2=\alpha_3+\ldots+\alpha_h$, so that $k=4$. So $\alpha=(\alpha_1,\alpha_2,\alpha_1-\alpha_2-1,1)$. From $\alpha_1-\alpha_2>\alpha_h=1$ and $\alpha_2>\alpha_1-\alpha_2-1$ it follows that $\alpha\in\overline{\Sign}$ when $\beta$ is self-adjoint.

\item
For $\alpha$ as in Theorem 3.3 of \cite{m2}, $\beta$ is not self-adjoint from Lemma \ref{l3}, as
\[\beta_2=\alpha_1-c=\alpha_2+\alpha_{k-1}+\ldots+\alpha_h-c>\alpha_2>2.\]

\item
For $\alpha$ as in Theorem 3.4 of \cite{m2} we have that if $\beta$ is self conjugate then $k=4$ and $(\alpha_{k-1},\ldots,\alpha_h)=(a,a-1,1)$, as in all other cases $\alpha_1\not=|\alpha|-\alpha_1-1$. Here $\alpha_1=\alpha_2+2a-1$ and $\alpha_2>2a$. The case $a\geq 4$ has already been considered in 1). For $a=2$ or $a=3$ instead we have $\alpha\in\overline{\Sign}$.

\item
For $\alpha$ as in Theorems 3.5, 3.9 and 3.11 of \cite{m2}, $\beta$ is not self-adjoint from Lemma \ref{l3}, as $\alpha_1>2$.

\item
For $\alpha$ as in Theorems 3.6, 3.7 and 3.10 of \cite{m2} the lemma follows again from Lemma \ref{l3}, as $\alpha_2>1$.

\item
For $\alpha$ as in Theorem 3.8 of \cite{m2}, if $\beta$ is self-adjoint then $\alpha_1=|\alpha|-\alpha_1+1$. By assumption $|\alpha|-\alpha_1\geq\alpha_2+\alpha_h>\alpha_1$, so that if $\beta$ is self-adjoint then $h=3$ and $\alpha=(\alpha_1,\alpha_2,\alpha_1-\alpha_2+1)$. As $\alpha_1>\alpha_2>\alpha_1-\alpha_2+1$, so that $\alpha_2+1\leq \alpha_1\leq 2\alpha_2-2$, it follows that $\alpha\in\overline{\Sign}$ when $\beta$ is not self conjugate.

\item
For $\alpha$ as in Theorem 3.12 of \cite{m2} we have that $\beta_2=\alpha_3\geq 4=\beta_2'$. In particular $\beta$ is not self conjugate for $\alpha_3\geq 5$. For $\alpha_3=4$ then $\alpha=(\alpha_1,\alpha_1-1,4,1)\in\overline{\Sign}$, as $\alpha_1-1>4$, so that $\alpha_1\geq 6$.

\item
For $\alpha$ as in Theorem 3.13 of \cite{m2} we have that $\alpha_3>\alpha_4\geq \alpha_{h-1}=2$ as $h\geq 5$ and then $\alpha_1>\alpha_2>\alpha_3+\alpha_4\geq 5$. In particular $\alpha_1\geq 7$ and then $\beta_2'=4<\alpha_1-2=\beta_2$ so that $\beta$ is not self conjugate.

\item
For $\alpha$ as in Theorem 3.14 of \cite{m2} if $\beta$ is self-adjoint then $\alpha_{h-1}=3$ and
\[|\alpha|-\alpha_1-\alpha_{h-1}+1=\alpha_1-\alpha_{h-1}-1+\alpha_{h-1}-3+3.\]
So $\alpha_2+\ldots+\alpha_h=\alpha_1+1$, which from the assumptions is equivalent to $\alpha_3+\ldots+\alpha_{h-1}=1$. Always from the assumption this would give
\[3<\alpha_3+\ldots+\alpha_{h-1}=1\]
leading to a contradiction. So $\beta$ is not self-adjoint.
\end{itemize}
\end{proof}

\section{$A_n$-sign partitions are elements of $\Sign\cup\overline{\Sign}$}\label{s5}

Let $\gamma$ be an $A_n$-sign partition. If $r\leq 2$ then clearly $\gamma\in\Sign\cup\overline{\Sign}$ from Lemma \ref{l1} (if $r=2$ and $\gamma_1=\gamma_2$ then $\gamma\in\{(1,1),(2,2)\}$).

So assume now that $r\geq 3$. From Lemma \ref{l1} we have that $(\gamma_{r-1},\gamma_r)\in\Sign$. Also either $\gamma\in\{(1,1,1),(2,2,1)\}$ and then $\gamma\in\overline{\Sign}$ or $\gamma_i>\gamma_{i+1}$ for $1\leq i\leq r-2$. Assume now that for some $2\leq i\leq r-2$ we have $(\gamma_{i+1},\ldots,\gamma_r)\in\Sign$ and $(\gamma_i,\ldots,\gamma_r)\not\in\Sign$. Then $\gamma_i>\gamma_{i+1}$ and so, if $(\gamma_i,\ldots,\gamma_r)\not=(5,4,3,2,1)$, we can find $\beta$ with $\chi^\beta_{(\gamma_i,\ldots,\gamma_r)}\not\in\{0,\pm 1\}$ and $h_{2,1}^\beta=\gamma_i$. From $2\leq i\leq r-2$ it follows $r\geq 4$, so that $\gamma\not\in\{(1,1,1),(2,2,1)\}$ and then $\gamma_j>\gamma_i$ for $j<i$. Let
\[\delta:=(\beta_1+\gamma+1+\ldots+\gamma_{i-1},\beta_2,\beta_3,\ldots).\]
As $h_{2,1}^\delta=h_{2,1}^\beta=\gamma_i$ and $i\geq 2$ we have
\[\delta_1'\leq h_{2,1}^\delta+1=\gamma_i+1<1+\gamma_1\leq\delta_1,\]
so that $\delta$ is not self conjugate. From
\[\chi^\delta_\gamma=\chi^\beta_{(\gamma_i,\ldots,\gamma_r)}\not\in\{0,\pm 1\},\]
we then have a contradiction to $\gamma$ being an $A_n$-sign partition.

Assume now that $(\gamma_i,\ldots,\gamma_r)=(5,4,3,2,1)$. If $\gamma_{i-1}\geq 7$ let
\[\delta:=(4+\gamma_1+\ldots+\gamma_{i-1},4,4,3).\]
In this case $\delta_1'=4<11\leq \delta_1$, so that also in this case $\delta$ is not self conjugate. As $\gamma_{i-1}\geq 7$,
\[\chi^\delta_\gamma=\chi^{(4,4,4,3)}_{(5,4,3,2,1)}=-2\]
and then also in this case we have a contradiction. If instead $\gamma_{i-1}=6$ let
\[\delta:=(15+\gamma_1+\ldots+\gamma_{i-2},2,1,1,1,1).\]
Here too $\delta$ is not self conjugate and
\[\chi^\delta_\gamma=\chi^{(15,2,1,1,1,1)}_{(6,5,4,3,2,1)}=2,\]
again leading to a contradiction.

By induction $(\gamma_2,\ldots,\gamma_r)\in\Sign$. Assume now that $\gamma\not\in\Sign\cup\overline{\Sign}$. Then from Lemma \ref{t2} (as $\gamma\not\in\{(1,1,1),(2,2,1)\}$, so that $\gamma_1>\gamma_2$ from Lemma \ref{l1} in this case) there exists $\beta$ not self conjugate with $\chi^\beta_\gamma\not\in\{0,\pm 1\}$, again leading to a contradiction.

In particular if $\gamma$ is an $A_n$-sign partition then $\gamma\in\Sign\cup\overline{\Sign}$.

\section{Partitions consisting of odd distinct parts are not $A_n$-sign partitions}\label{s6}

Let $\gamma$ consists of odd distinct parts and let $\lambda$ be the self conjugate partition with diagonal hook lengths equal to the parts of $\gamma$. Then
\[\chi^{\lambda,\pm}_{\gamma,+}=\frac{\epsilon \pm\sqrt{\epsilon \gamma_1\ldots\gamma_r}}{2}\]
with $\epsilon\in\{\pm 1\}$.

If $\epsilon=-1$ clearly $\chi^{\lambda,\pm}_{\gamma,+}\not\in\{0,\pm 1\}$.

Assume now that $\epsilon=1$. From $\gamma$ being a partition of $n\geq 2$ it follows that $\gamma_1\cdots\gamma_r\geq 2$. In particular $\chi^{\lambda,+}_{\gamma,+}>1$.

Similarly $\chi^{\lambda,\pm}_{\gamma,-}$ are not both in $\{0,\pm 1\}$ and then $\gamma$ is not an $A_n$-sign partition. Together with Section \ref{s5} this prove that if $\gamma$ is an $A_n$-sign partition then
\[\gamma\in(\Sign\cup\overline{\Sign})\cap\{\mbox{even partitions not consisting of odd distinct parts}\}.\]

\section{Elements of $\Sign\cup\overline{\Sign}$ are $A_n$-sign partitions}\label{s7}

We will now prove that if
\[\gamma\in(\Sign\cup\overline{\Sign})\cap\{\mbox{even partitions not consisting of odd distinct parts}\}\]
then $\gamma$ is an $A_n$-sign partition. If $\gamma\in\Sign$ this is easily proved in the next theorem.

\begin{theor}
Let $\gamma\in\Sign$ be an even partition not consisting of odd distinct parts. Then $\gamma$ is an $A_n$-sign partition.
\end{theor}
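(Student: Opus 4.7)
The plan is to reduce the statement to a one-line consequence of the classification of $\Sign$ from \cite{m2} together with the standard branching of characters from $S_n$ to $A_n$. Fix $g\in A_n$ of cycle type $\gamma$. Since $\gamma$ does not consist of distinct odd parts, the conjugacy class of $g$ does not split on passing from $S_n$ to $A_n$. Every irreducible character of $A_n$ therefore arises either as the restriction $\chi^\lambda\downarrow_{A_n}$ of $\chi^\lambda$ for some non-self-conjugate $\lambda\vdash n$, or as one of the two constituents $\chi^{\lambda,\pm}$ of $\chi^\lambda\downarrow_{A_n}$ for some self-conjugate $\lambda\vdash n$; and in the latter, non-splitting situation the standard formula yields $\chi^{\lambda,+}(g)=\chi^{\lambda,-}(g)=\chi^\lambda(g)/2$.

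For the first family of characters, the hypothesis $\gamma\in\Sign$ together with Theorem 1.3 of \cite{m2} immediately gives $\chi^\lambda(g)\in\{0,\pm 1\}$, so $\chi^\lambda\downarrow_{A_n}(g)\in\{0,\pm 1\}$ as required. The heart of the argument is the self-conjugate case. Here I would observe that $\chi^{\lambda,\pm}(g)=\chi^\lambda(g)/2$ is a priori a rational number, while every irreducible character value is an algebraic integer; a rational algebraic integer is an ordinary integer, so $\chi^\lambda(g)/2\in\mathbb{Z}$. Combined with $\chi^\lambda(g)\in\{0,\pm 1\}$ this forces $\chi^\lambda(g)=0$, and hence $\chi^{\lambda,\pm}(g)=0$.

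Assembling the two cases shows that every irreducible character of $A_n$ takes value in $\{0,\pm 1\}$ on $g$, so $\gamma$ is an $A_n$-sign partition. There is essentially no technical obstacle: all the combinatorial work has already been done in \cite{m2}, and the only novelty is the integrality trick that forces the self-conjugate contributions to vanish on a non-splitting class. The same trick is what makes the cases $\gamma\in\overline{\Sign}\setminus\Sign$ genuinely delicate in the remainder of Section \ref{s7}, since there some $\chi^\lambda(g)=\pm 2$ can occur for self-conjugate $\lambda$ and the integrality argument no longer gives a contradiction, but for $\gamma\in\Sign$ the theorem is immediate.
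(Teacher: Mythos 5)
Your proof is correct and follows essentially the same route as the paper: both reduce to Theorem 1.3 of \cite{m2} via the branching of $S_n$-characters to $A_n$, observing that on a non-split class each $\chi^{\lambda,\pm}$ takes the value $\chi^\lambda_\gamma/2\in\{0,\pm1/2,\pm1\}$, which must then lie in $\{0,\pm1\}$. The paper leaves the integrality step implicit where you spell it out, but the argument is the same.
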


\begin{proof}
From Theorem 1.3 of \cite{m2} we have that $\gamma$ is a sign partition. So $\chi^\beta_\gamma\in\{0,\pm 1\}$ for every $\beta\vdash n$. In particular $\chi_\gamma\in\{0,\pm1/2,\pm 1\}$ and then also $\chi_\gamma\in\{0,\pm 1\}$ for every irreducible character $\chi$ of $A_n$, that is $\gamma$ is an $A_n$-sign partition.
\end{proof}

For $\gamma\in\overline{\Sign}$ the proof is more complicated. It can be checked that $(1,1,1)$, $(2,2)$, $(2,2,1)$, $(5,4,3,2,1)$, $(6,5,4,1)$, $(6,3,2,1,1)$, $(9,4,3,2,1)$ and $(10,5,3,2,1)$ are $A_n$-sign partitions by looking at the corresponding characters tables. For the other elements of $\overline{\Sign}$ we will use the following lemmas.

\begin{lemma}\label{l6}
If $\gamma\in\overline{\Sign}$ does not consist of odd distinct parts, then $\gamma$ is an $A_n$-sign partition if and only if $\chi^\beta_\gamma\in\{0,\pm 1\}$ for every $\beta\vdash n$ not self conjugate with at least two $\gamma_1$-hooks.
\end{lemma}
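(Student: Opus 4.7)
The only-if direction is immediate: if $\beta\vdash n$ is not self-conjugate, then $\chi^\beta|_{A_n}$ is an irreducible character of $A_n$ with value $\chi^\beta_\gamma$ on the class of $\gamma$, so $A_n$-signhood forces $\chi^\beta_\gamma\in\{0,\pm 1\}$, in particular for such $\beta$ with at least two $\gamma_1$-hooks. For the converse I would first set up the framework: irreducible characters of $A_n$ are either $\chi^\beta|_{A_n}$ (for $\beta$ not self-conjugate) or a constituent $\chi^{\beta,\pm}$ (for $\beta$ self-conjugate). Because $\gamma$ does not consist of distinct odd parts, the $A_n$-class of $\gamma$ is non-split, so $\chi^{\beta,+}_\gamma=\chi^{\beta,-}_\gamma=\chi^\beta_\gamma/2$; this common value is both an algebraic integer and rational, hence an integer, and $\chi^\beta_\gamma$ is automatically even for every self-conjugate $\beta$. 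Thus the $A_n$-sign condition is equivalent to requiring $\chi^\beta_\gamma\in\{0,\pm 1\}$ for all non-self-conjugate $\beta$ and $\chi^\beta_\gamma\in\{0,\pm 2\}$ for all self-conjugate $\beta$.

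A direct inspection of the six families in the definition of $\overline{\Sign}$ shows that $(\gamma_2,\ldots,\gamma_r)\in\Sign$ whenever $\gamma\in\overline{\Sign}$, so by Theorem 1.3 of \cite{m2} one has $\chi^\alpha_{(\gamma_2,\ldots,\gamma_r)}\in\{0,\pm 1\}$ for every $\alpha\vdash n-\gamma_1$. Murnaghan--Nakayama then disposes of all $\beta$ with at most one $\gamma_1$-hook: if there is no $\gamma_1$-hook then $\chi^\beta_\gamma=0$, and if there is a single one $h$ then $\chi^\beta_\gamma=\pm\chi^{\beta\setminus h}_{(\gamma_2,\ldots,\gamma_r)}\in\{0,\pm 1\}$, which for self-conjugate $\beta$ combines with the parity constraint to force $\chi^\beta_\gamma=0$. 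For non-self-conjugate $\beta$ with at least two $\gamma_1$-hooks the hypothesis of the lemma applies directly.

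The remaining case, and the main obstacle, is self-conjugate $\beta$ with at least two $\gamma_1$-hooks. Here I would pair each off-diagonal $\gamma_1$-hook $h$ with its transpose $h'$ (also a $\gamma_1$-hook, since $\beta=\beta'$), and note that at most one self-paired diagonal $\gamma_1$-hook $d$ can occur, and only when $\gamma_1$ is odd. Using $\beta\setminus h'=(\beta\setminus h)'$, the identity $\chi^{\alpha'}=\mathrm{sgn}\cdot\chi^\alpha$, and the evenness of $\gamma$, the combined contribution of a transpose pair to $\chi^\beta_\gamma$ collapses to $2(-1)^{\mathrm{leg}(h)}\chi^{\beta\setminus h}_{(\gamma_2,\ldots,\gamma_r)}$, while the diagonal contribution $\pm\chi^{\beta\setminus d}_{(\gamma_2,\ldots,\gamma_r)}$ is in $\{0,\pm 1\}$ but must be even, hence zero. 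This reduces $\chi^\beta_\gamma/2$ to a signed sum of entries of $\{0,\pm 1\}$ indexed by transpose-pairs of $\gamma_1$-hooks, and the goal becomes to bound this sum by $1$ in absolute value. Since $\gamma_1$ is close to $|\gamma|/2$ for every $\gamma\in\overline{\Sign}$, only a handful of $\gamma_1$-hooks can fit into any such $\beta$, and I expect to close the case either by a direct hook count exploiting the constrained $\gamma_1$-quotient of a self-conjugate partition, or by transferring the problem to a non-self-conjugate $\delta$ with at least two $\gamma_1$-hooks and $\chi^\delta_\gamma=\pm\chi^\beta_\gamma/2$, so that the hypothesis of the lemma applies.
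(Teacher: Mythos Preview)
Your setup is correct and matches the paper's: the only-if direction is trivial, and for the if direction you correctly reduce to checking $\chi^\beta_\gamma\in\{0,\pm1\}$ for non-self-conjugate $\beta$ and $\chi^\beta_\gamma\in\{0,\pm2\}$ for self-conjugate $\beta$, using that $(\gamma_2,\ldots,\gamma_r)\in\Sign$ to handle all $\beta$ with at most one $\gamma_1$-hook. The problem is your treatment of self-conjugate $\beta$ with several $\gamma_1$-hooks: the transpose-pairing computation is correct, but you leave the endgame as ``I expect to close the case'', which is a genuine (if small) gap.

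The paper's proof closes this in one line, and you already have the ingredients for it. A direct check on each family in $\overline{\Sign}$ gives $|\gamma|\le 3\gamma_1$, so every $\beta\vdash n$ has at most three $\gamma_1$-hooks, whence $|\chi^\beta_\gamma|\le 3$ (each hook contributes $0$ or $\pm1$ after removal, since $(\gamma_2,\ldots,\gamma_r)$ is a sign partition). You have already shown that $\chi^\beta_\gamma$ is even when $\beta$ is self-conjugate; together with $|\chi^\beta_\gamma|\le 3$ this forces $\chi^\beta_\gamma\in\{0,\pm2\}$, and the lemma follows. No analysis of diagonal hooks, transpose pairs, or auxiliary non-self-conjugate $\delta$ is needed. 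If you prefer to salvage your pairing route, the same bound of three $\gamma_1$-hooks means there is at most one transpose pair, so your signed sum has at most one term and is automatically in $\{0,\pm1\}$; but the crude bound $|\chi^\beta_\gamma|\le 3$ plus parity is simpler and is what the paper does.
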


\begin{proof}
Let $\gamma\in\overline{\Sign}$. Then $\gamma$ is an even partition. From $\gamma\in\overline{\Sign}$ we have that $(\gamma_2,\ldots,\gamma_r)\in\Sign$ and that $|\gamma|\leq 3\gamma_1$. In particular any $\beta\vdash n$ has at most 3 $\gamma_1$-hooks. As $(\gamma_2,\ldots,\gamma_r)\in\Sign$ and then it is a sign partition from Theorem 1.3 of \cite{m2}, it follows that $\chi^\beta_\gamma\in\{0,\pm1,\pm2,\pm3\}$ for every $\beta\vdash n$.

As $\gamma$ does not consist of odd distinct parts, it is then enough to prove that $\chi^\beta_\gamma\in\{0,\pm 1\}$ for every $\beta\vdash n$ not self conjugate with at least two $\gamma_1$-hooks, as then $\chi_\gamma\in\{0,\pm1/2,\pm1,\pm3/2\}$, and so also $\chi_\gamma\in\{0,\pm1\}$, for every irreducible character $\chi$ of $A_n$.
\end{proof}

The next lemma is a generalization of Lemma 4.1 of \cite{m2}.

\begin{lemma}\label{l4}
Let $\gamma=(\gamma_1,\ldots,\gamma_r)$ be a partition. Assume that $(\gamma_2,\ldots,\gamma_r)$ is a sign partition an that $\gamma_2+\ldots+\gamma_r<2a$. If $\beta$ is a partition of $n$ for which $\chi^\beta_\gamma\not\in\{0,\pm 1\}$ then $\beta$ has two $\gamma_1$-hooks. Also if $\delta$ is obtained from $\beta$ by removing a $\gamma_1$-hook then $\chi^\delta_{(\gamma_2,\ldots,\gamma_r)}\not=0$. In particular each such $\delta$ has a $\gamma_2$-hook.
\end{lemma}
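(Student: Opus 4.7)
The plan is to apply the Murnaghan--Nakayama rule to $\chi^\beta_\gamma$ peeling off a $\gamma_1$-hook first, and then use the sign-partition hypothesis on $(\gamma_2,\ldots,\gamma_r)$ to bound every term. Explicitly, one writes
\[
\chi^\beta_\gamma \;=\; \sum_{H}(-1)^{\ell(H)}\,\chi^{\beta\setminus H}_{(\gamma_2,\ldots,\gamma_r)},
\]
where $H$ ranges over the $\gamma_1$-hooks of $\beta$ and $\ell(H)$ is the leg length. Since $(\gamma_2,\ldots,\gamma_r)$ is a sign partition, each summand lies in $\{0,\pm 1\}$.

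Next I would use the numerical hypothesis $\gamma_2+\ldots+\gamma_r<2\gamma_1$ to control the number of $\gamma_1$-hooks of $\beta$. This gives $|\gamma|<3\gamma_1$, and since disjoint $\gamma_1$-hooks of $\beta$ each occupy $\gamma_1$ cells, $\beta$ admits at most two $\gamma_1$-hooks. If $\beta$ had $0$ or $1$ such hooks, the sum above would lie in $\{0,\pm 1\}$, contradicting the assumption on $\chi^\beta_\gamma$. Therefore $\beta$ has exactly two $\gamma_1$-hooks.

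For the second assertion, denote by $\delta_1,\delta_2$ the two partitions obtained by removing a $\gamma_1$-hook from $\beta$; the Murnaghan--Nakayama identity then reads
\[
\chi^\beta_\gamma \;=\; \pm\,\chi^{\delta_1}_{(\gamma_2,\ldots,\gamma_r)} \pm \chi^{\delta_2}_{(\gamma_2,\ldots,\gamma_r)}.
\]
If some $\chi^{\delta_i}_{(\gamma_2,\ldots,\gamma_r)}=0$, the right-hand side would again lie in $\{0,\pm 1\}$, contradicting the hypothesis. Hence both $\chi^{\delta_i}_{(\gamma_2,\ldots,\gamma_r)}$ are nonzero, and a second application of Murnaghan--Nakayama (peeling off a $\gamma_2$-hook from $\delta_i$) shows that nonvanishing forces $\delta_i$ to possess at least one $\gamma_2$-hook.

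There is no real obstacle here; the entire argument is a bookkeeping application of Murnaghan--Nakayama together with the sign-partition property of $(\gamma_2,\ldots,\gamma_r)$. The only subtle point is making sure the inequality $\gamma_2+\ldots+\gamma_r<2\gamma_1$ is being used correctly to forbid a third disjoint $\gamma_1$-hook in $\beta$, but this is immediate by cell counting.
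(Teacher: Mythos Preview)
Your argument is correct and is essentially identical to the paper's proof: both apply Murnaghan--Nakayama to strip a $\gamma_1$-hook, use that $|\gamma|<3\gamma_1$ forces at most two $\gamma_1$-hooks, and invoke the sign-partition hypothesis to bound each remaining term by $1$. The only cosmetic point is that the bound on the number of $\gamma_1$-hooks comes from the $\gamma_1$-weight (successive removals), not from the rim hooks being literally disjoint as subsets of $[\beta]$, but this is the standard fact the paper also uses without comment.
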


\begin{proof}
By assumption
\[n=|\gamma|=\gamma_1+\gamma_2+\ldots+\gamma_r<3a.\]
It follows that any partition of $n$ has at most two $\gamma_1$-hooks. As
\[\chi^\beta_\gamma=\sum_{(i,j):h_{i,j}^\beta=\gamma_1}\pm\chi^{\beta\setminus R_{i,j}^\beta}_{(\gamma_2,\ldots,\gamma_r)}\]
and $(\gamma_2,\ldots,\gamma_r)$ is a sign partition by Theorem 1.3 of \cite{m2}, so that $\chi^{\beta\setminus R_{i,j}^\beta}_{(\gamma_2,\ldots,\gamma_r)}\in\{0,\pm 1\}$ for each $(i,j)\in[\beta]$ with $h_{i,j}^\beta=\gamma_1$, the lemma follows.
\end{proof}

We will now prove that all remaining elements of $\overline{\Sign}$ are $A_n$-sign partitions if they do not consists of odd distinct parts.

\begin{theor}
If $a\geq 7$ then $(a,a-1,4,1)$ is an $A_n$-sign partition.
\end{theor}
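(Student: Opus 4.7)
The plan is to apply Lemma~\ref{l6} and reduce to a finite case analysis. First I would note that $\gamma=(a,a-1,4,1)\in\overline{\Sign}$ by definition for $a\geq 7$, that $\gamma$ is even (the sign of its permutation is $(-1)^{(a-1)+(a-2)+3}=(-1)^{2a}=1$), and that $\gamma$ does not consist of odd distinct parts since $4$ is a part. By Lemma~\ref{l6}, it then suffices to prove $\chi^\beta_\gamma\in\{0,\pm 1\}$ for every non-self-conjugate $\beta\vdash n:=2a+4$ with at least two $a$-hooks.

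To pin down such $\beta$'s I would apply Lemma~\ref{l4} twice. The first application uses $(a-1,4,1)\in\Sign$ (take $s=3$ in the definition) together with $(a-1)+4+1=a+4<2a$: if $\chi^\beta_\gamma\notin\{0,\pm 1\}$ then $\beta$ has exactly two $a$-hooks $H_1,H_2$, each $\delta_i:=\beta\setminus H_i$ satisfies $\chi^{\delta_i}_{(a-1,4,1)}\neq 0$, and each $\delta_i$ has an $(a-1)$-hook. The second application, with $(4,1)\in\Sign$ and $5<2(a-1)$, gives that each $\delta_i$ has exactly one $(a-1)$-hook and that removing it produces $\mu_i\vdash 5$ with $\chi^{\mu_i}_{(4,1)}\neq 0$. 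A short Murnaghan--Nakayama computation then forces $\mu_i\in\{(5),(3,2),(2,2,1),(1^5)\}$, with character values $1,-1,1,-1$ respectively.

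I would then enumerate the candidate $\beta$'s. Since $|\beta|=2a+4$ and $\beta$ has two $a$-hooks, its $a$-core is a partition of $4$ and its $a$-quotient consists of two copies of $(1)$ on two distinct runners (the only $a$-quotient of total size $2$ with two removable cells). The requirement that each $\delta_i$ has an $(a-1)$-hook removing to one of the four $\mu$'s above severely restricts the admissible choices of $a$-core and of runners, leaving a short list of families of $\beta$'s parametrized by $a$ and a few integer positions. For each such candidate, Murnaghan--Nakayama gives $\chi^\beta_\gamma=\varepsilon_1\chi^{\delta_1}_{(a-1,4,1)}+\varepsilon_2\chi^{\delta_2}_{(a-1,4,1)}\in\{-2,0,2\}$, and the remaining task is to verify that $|\chi^\beta_\gamma|=2$ forces $\beta$ to be self-conjugate.

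The main obstacle is this final verification, which requires careful bookkeeping of the leg-length parities of the two $a$-ribbons of $\beta$ and of the $(a-1)$-ribbon of each $\delta_i$. A useful structural observation is that $\gamma$ is even, so $\chi^{\beta'}_\gamma=\chi^\beta_\gamma$, and the list $\{(5),(3,2),(2,2,1),(1^5)\}$ is closed under conjugation with $\chi^{\mu'}_{(4,1)}=-\chi^\mu_{(4,1)}$. Accordingly I expect the self-conjugate $\beta$'s to correspond to configurations in which the pair of runners carrying the two $(1)$'s in the $a$-quotient is symmetric under the involution induced by conjugation of partitions, while in the remaining non-self-conjugate candidates the two summands either have opposite signs and cancel, or one of the $\mu_i$ falls outside the admissible list and the corresponding summand vanishes.
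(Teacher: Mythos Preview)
Your setup matches the paper's approach closely: reduce via Lemma~\ref{l6}, observe $|\gamma|=2a+4<3a$ so that any $\beta$ has at most two $a$-hooks, and use Lemma~\ref{l4} to conclude that the two $\delta_i=\beta\setminus H_i$ must satisfy $\chi^{\delta_i}_{(a-1,4,1)}\neq 0$ and in particular carry an $(a-1)$-hook. One small correction: your ``second application of Lemma~\ref{l4}'' is not literally an application of that lemma, since $\chi^{\delta_i}_{(a-1,4,1)}\in\{\pm1\}$ (because $(a-1,4,1)\in\Sign$). What you actually need is the simpler observation that $|\delta_i|=a+4<2(a-1)$, so $\delta_i$ has at most one $(a-1)$-hook; removing it yields $\mu_i\vdash 5$ with $\chi^{\mu_i}_{(4,1)}\neq 0$, and your list $\{(5),(3,2),(2,2,1),(1^5)\}$ is then correct.

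The genuine gap is that you stop at a heuristic where the paper carries out the computation. The paper organises the case analysis by the $a$-core $\beta_{(a)}\in\{(4),(3,1),(2,2)\}$ (using $\chi^{\lambda'}_\rho=\pm\chi^\lambda_\rho$ to halve the work), lists for each core the partitions of $a+4$ obtainable by adding a single $a$-hook, discards those with no $(a-1)$-hook or with $\chi^\delta_{(a-1,4,1)}=0$, and is left with exactly one or two explicit $\beta$'s per core. In every surviving non-self-conjugate case the two Murnaghan--Nakayama summands turn out to cancel and $\chi^\beta_\gamma=0$; for instance with core $(4)$ one gets $\beta=(a+4,6,1^{a-6})$ and $\chi^\beta_\gamma=-(-1)^{a-6}+(-1)^{a-6}=0$. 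Your conjugation-symmetry expectation is consistent with this outcome, but it is not a proof: the pairs $\{\delta_1,\delta_2\}$ arising are \emph{not} in general conjugate to one another (e.g.\ $(a+4)$ and $(5,5,1^{a-6})$ above), so the cancellation does not follow formally from $\chi^{\beta'}_\gamma=\chi^\beta_\gamma$; it has to be checked sign by sign. You should replace the final paragraph by the explicit (short) case analysis, as the paper does.
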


\begin{proof}
As $|(a,a-1,4,1)|=2a+4<3a$, so that any partition of $(a,a-1,4,1)$ has at most two $a$-hooks, we only need to consider, from Lemma \ref{l6}, partitions of $2a+4$ with 2 $a$-hooks. Let $\beta$ be any such partition. Then the $a$-core of $\beta$ is a partition of 4 and so $\beta_{(a)}\in\{(4),(3,1),(2,2),(2,1,1),(1,1,1,1)\}$. As $\chi^{\lambda'}_\rho=\pm\chi^\lambda_\rho$ and $(\lambda')_{(q)}=(\lambda_{(q)})'$ for $\lambda,\rho$ partitions and $q\geq 1$, we can assume that $\beta_{(a)}\in\{(4),(3,1),(2,2)\}$.

Assume first that $\beta_{(a)}=(4)$. From any $a$-core we can obtain exactly $a$ different partitions by adding an $a$-hook to it. In this case they are given by
\[\{(a+4)\}\cup\{(4, i,1^{a-i}):1\leq i\leq 4\}\cup\{(a-i,5,1^{i-1}):1\leq i\leq a-5\}.\]
If $\mu$ and $\nu$ are the partitions obtained from $\beta$ by removing an $a$-hook, we can assume from Lemma \ref{l4} that
\[\mu,\nu\in\{(a+4),(4,1^a),(5,5,1^{a-6}),(a-1,5)\}\]
as the other partition do not have $(a-1)$-hooks. From $a\geq 7$ and then
\[\chi^{(4,1^a)}_{(a-1,4,1)},\chi^{(a-1,5)}_{(a-1,4,1)}=\pm\chi^{(4,1)}_{(4,1)}=0\]
we can further assume that $\mu,\nu\in\{(a+4),(5,5,1^{a-6})\}$. Since $\mu\not=\nu$ and as we can recover $\beta$ from the $a$-cores and $a$-quotients of $\mu$ and $\nu$ (there exists a unique such partition $\beta$), we obtain that $\beta=(a+4,6,1^{a-6})$ and then that
\[\chi^\beta_{(a,a-1,4,1)}=-\chi^{(5,5,1^{a-6})}_{(a-1,4,1)}+(-1)^{a-6}\chi^{(a+4)}_{(a-1,4,1)}=-(-1)^{a-6}\chi^{(5)}_{(4,1)}+(-1)^{a-6}=0.\]

Assume next that $\beta_{(a)}=(3,1)$. In this case
\begin{eqnarray*}
\mu,\nu&\in&\{(a+3,1),(a,4),(3,3,2,1^{a-4}),(3,2,2,1^{a-3}),(3,1^{a+1})\}\\
&&\hspace{12pt}\cup\{(a-i,4,2,1^{i-2}):2\leq i\leq a-4\}.
\end{eqnarray*}
Again from Lemma \ref{l4} we can assume that $\mu$ and $\nu$ have an $(a-1)$-hook and so
\[\mu,\nu\in\{(a+3,1),(a,4),(3,2,2,1^{a-3}),(3,1^{a+1}),(4,4,2,1^{a-6}),(a-2,4,2)\}.\]
Since
\begin{eqnarray*}
\chi^{(a+3,1)}_{(a-1,4,1)},\chi^{(3,1^{a+1})}_{(a-1,4,1)},\chi^{(4,4,2,1^{a-6})}_{(a-1,4,1)}&=&\pm\chi^{(4,1)}_{(4,1)}=0,\\
\chi^{(a-2,4,2)}_{(a-1,4,1)}&=&\chi^{(3,1,1)}_{(4,1)}=0
\end{eqnarray*}
we can assume that $\mu,\nu\in\{(a,4),(3,2,2,1^{a-3})\}$, that is $\beta=(a,4,3,1^{a-3})$. In this case
\[\chi^\beta_{(a,a-1,4,1)}\!=(-1)^{a-3}\chi^{(a,4)}_{(a-1,4,1)}+\chi^{(3,2,2,1^{a-3})}_{(a-1,4,1)}\!=(-1)^{a-2}\chi^{(3,2)}_{(4,1)}+(-1)^{a-3}\chi^{(3,2)}_{(4,1)}=0.\]

Let now $\beta_{(a)}=(2,2)$. Then
\begin{eqnarray*}
\mu,\nu&\in&\{(a+2,2),(a+1,3),(2,2,2,1^{a-2}),(2,2,1^a)\}\\
&&\hspace{12pt}\cup\{(a-i,3,3,1^{i-2}):2\leq i\leq a-3\}.
\end{eqnarray*}
From $\mu$ and $\nu$ having an $(a-1)$-hook it follows
\[\mu,\nu\in\{(a+2,2),(2,2,1^a),(a-2,3,3),(3,3,3,1^{a-5})\}.\]
As $(a+2,2)'=(2,2,1^a)$ and $(a-2,3,3)'=(3,3,3,1^{a-5})$ and from $\beta$ being self-adjoint if $\mu=\nu'$ (as the partitions obtained from $\beta'$ by removing an $a$-hook are $\mu'=\nu$ and $\nu'=\mu$ in this case) we can assume that $\mu\in\{(a+2,2),(2,2,1^a)\}$ and $\nu\in\{(a-2,3,3),(3,3,3,1^{a-5})\}$. From $\chi^\lambda_\rho=\pm\chi^{\lambda'}_\rho$, we can further assume that $\mu=(a+2,2)$. Then $\beta\in\{(a+2,a-1,3),(a+2,4,3,1^{a-5})\}$. From
\begin{eqnarray*}
\chi^{(a+2,a-1,3)}_{(a,a-1,4,1)}&\!=\!&-\chi^{(a-2,3,3)}_{(a-1,4,1)}-\chi^{(a+2,2)}_{(a-1,4,1)}=-\chi^{(2,2,1)}_{(4,1)}-\chi^{(3,2)}_{(4,1)}=0,\\
\chi^{(a+2,4,3,1^{a-5})}_{(a,a-1,4,1)}&\!=\!&-\chi^{(3^3,1^{a-5})}_{(a-1,4,1)}+(-1)^{a-4}\chi^{(a+2,2)}_{(a-1,4,1)}=(-1+1)(-1)^{a-4}\chi^{(3,2)}_{(4,1)}=0
\end{eqnarray*}
it then follows that $(a,a-1,4,1)$ is an $A_n$-sign partition.
\end{proof}

\begin{theor}
If $a\geq 8$ then $(a,a-3,2,1,1)$ is an $A_n$-sign partition.
\end{theor}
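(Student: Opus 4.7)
The approach follows the template of the previous theorem for $(a,a-1,4,1)$. Write $\gamma=(a,a-3,2,1,1)$ and $n=|\gamma|=2a+1$. Since $n<3a$, every partition of $n$ has at most two $a$-hooks, so by Lemma \ref{l6} it suffices to show $\chi^\beta_\gamma\in\{0,\pm 1\}$ for every non-self-conjugate $\beta\vdash n$ with exactly two removable $a$-hooks. Any such $\beta$ has $a$-core $(1)$, which is self-conjugate so no splitting by core is needed, and Lemma \ref{l4} applies because $(a-3,2,1,1)\in\Sign$ (take $s=1$: $a-3>4$ for $a\geq 8$, and the tail $(2,1,1)$ is the base partition $(a,a-1,1)$ with $a=2$).

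First I would enumerate the partitions $\delta\vdash a+1$ with $a$-core $(1)$; using the $a$-abacus of $(1)$, these are $(a+1)$, $(1^{a+1})$ and $(k,2,1^{a-k-1})$ for $2\leq k\leq a-1$. Locating the (unique) $(a-3)$-hook in each when present and applying Murnaghan--Nakayama, I would show that $\chi^\delta_{(a-3,2,1,1)}$ equals $\pm 1$ exactly for $\delta\in\{(a+1),(1^{a+1}),(4,2,1^{a-5}),(a-3,2,1,1)\}$ and vanishes otherwise: for $k\in\{2,a-1\}$ the removal leaves $(2,2)$, with $\chi^{(2,2)}_{(2,1,1)}=0$, and for the remaining $k$ there is no $(a-3)$-hook. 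Note also the conjugacy pairings $(a+1)'=(1^{a+1})$ and $(4,2,1^{a-5})'=(a-3,2,1,1)$.

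By Lemma \ref{l4}, only $\beta$ whose two intermediate partitions both lie in this four-element set can violate the $\{0,\pm 1\}$ bound; equivalently the $a$-quotient of $\beta$ is supported on the four ``good'' runners of the $(1)$-abacus corresponding to these $\delta$. If both cells of the quotient sit on one good runner, $\beta$ has only one removable $a$-hook and $\chi^\beta_\gamma=\pm 1$ automatically. Otherwise the two cells lie on distinct good runners, giving $\binom{4}{2}=6$ explicit partitions $\beta$; computing each via the abacus, two are self-conjugate, namely $(a+1,1^a)$ and $(a-3,5,3,2^2,1^{a-8})$, which are exempt from Lemma \ref{l6}, while the remaining four pair under $\beta\mapsto\beta'$ into two conjugacy classes with representatives $(4,2^{a-3},1^3)$ and $(a-3,2^4,1^{a-4})$.

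For each of these two representatives I would apply Murnaghan--Nakayama to write $\chi^\beta_\gamma=\epsilon_1\chi^{\delta_1}_{(a-3,2,1,1)}+\epsilon_2\chi^{\delta_2}_{(a-3,2,1,1)}$, with the $\chi^{\delta_i}_{(a-3,2,1,1)}\in\{\pm 1\}$ known from above and each sign $\epsilon_i=(-1)^{\ell_i}$ the leg length of the corresponding rim $a$-hook, read off the abacus as the number of beads strictly between source and target of the bead-up move. In both cases the two contributions have opposite signs and sum to $0$; for instance $\beta=(4,2^{a-3},1^3)$ gives $(-1)^{a-2}(-1)^{a-5}+(-1)^{a-3}(-1)^{a-3}=-1+1=0$. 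The main obstacle is precisely this sign bookkeeping, which must be done case by case via the abacus; once completed, every non-self-conjugate $\beta\vdash n$ with two $a$-hooks satisfies $\chi^\beta_\gamma\in\{0,\pm 1\}$, and Lemma \ref{l6} yields the theorem.
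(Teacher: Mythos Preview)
Your proposal is correct and follows essentially the same approach as the paper: reduce via Lemma~\ref{l6} to non-self-conjugate $\beta\vdash 2a+1$ with two $a$-hooks, list the partitions $\delta\vdash a+1$ obtained by removing one $a$-hook, discard those with $\chi^\delta_{(a-3,2,1,1)}=0$ to leave the four-element set $\{(a+1),(1^{a+1}),(a-3,2,1,1),(4,2,1^{a-5})\}$, use the conjugation symmetry $\{\mu,\nu\}\leftrightarrow\{\mu',\nu'\}$ to reduce to two explicit $\beta$'s, and verify both character values vanish. The only cosmetic differences are that the paper works directly rather than via the abacus/quotient language and chooses the conjugate representatives $(a+1,a-2,1,1)$ and $(a+1,5,1^{a-5})$ of your $(4,2^{a-3},1^3)$ and $(a-3,2^4,1^{a-4})$.
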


\begin{proof}
Since $|(a,a-3,2,1,1)|=2a+1$ we only need to consider not self-adjoint $\beta\vdash 2a+1$ with two $a$-hooks from Lemma \ref{l6}. In this case $\beta_{(a)}=(1)$. Let $\mu,\nu$ be obtained from $\beta$ by removing these $a$-hooks. Then
\[\mu,\nu\in\{(a+1),(1^{a+1})\}\cup\{(a-i,2,1^{i-1}):1\leq i\leq a-2\}.\]
From Lemma \ref{l4} we can assume that $\mu$ and $\nu$ have an $(a-3)$-hook, so that
\[\mu,\nu\in\{(a+1),(1^{a+1}),(a-1,2),(a-3,2,1,1),(4,2,1^{a-5}),(2^2,1^{a-3})\}.\]
As $a-3>2$
\[\chi^{(a-1,2)}_{(a-3,2,1,1)},\chi^{(2^2,1^{a-3})}_{(2,1,1)}=\pm\chi^{(2,2)}_{(2,1,1)}=0\]
we can further assume that
\[\mu,\nu\in\{(a+1),(1^{a+1}),(a-3,2,1,1),(4,2,1^{a-5})\}.\]
From $(a+1)'=(1^{a+1})$ and $(a-3,2,1^2)'=(4,2,1^{a-5})$ we can, as in the previous theorem, assume that $\mu=(a+1)$ and $\nu\in\{(a-3,2,1^2),(4,2,1^{a-5})\}$, so that $\beta\in\{(a+1,a-2,1,1),(a+1,5,1^{a-5})\}$. From $a>5$ it follows
\begin{eqnarray*}
\chi^{(a+1,a-2,1,1)}_{(a,a-3,2,1,1)}&\!\!\!=\!\!\!&\chi^{(a+1)}_{(a-3,2,1,1)}-\chi^{(a-3,2,1,1)}_{(a-3,2,1,1)}=1+\chi^{(1^4)}_{(2,1,1)}=0,\\
\chi^{(a+1,5,1^{a-5})}_{(a,a-3,2,1,1)}&\!\!\!=\!\!\!&(-1)^{a-5}\chi^{(a+1)}_{(a-3,2,1,1)}-\chi^{(4,2,1^{a-5})}_{(a-3,2,1,1)}=(1-1)(-1)^{a-5}\chi^{(4)}_{(2,1,1)}=0
\end{eqnarray*}
it then follows from Lemma \ref{l6} that $(a,a-3,2,1,1)$ is an $A_n$-sign partition.
\end{proof}

\begin{theor}
If $a\geq 12$ then $(a,a-5,3,2,1)$ is an $A_n$-sign partition.
\end{theor}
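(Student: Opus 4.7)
The plan is to follow the template set by the two preceding theorems. Since $|\gamma|=2a+1<3a$, every $\beta\vdash 2a+1$ has at most two $a$-hooks, so by Lemma~\ref{l6} it suffices to verify that $\chi^\beta_\gamma\in\{0,\pm 1\}$ for each non-self-conjugate $\beta$ with exactly two $a$-hooks. For such $\beta$ the $a$-core is $(1)$, and removing the two hooks produces partitions
\[\mu,\nu\in\{(a+1),(1^{a+1})\}\cup\{(a-i,2,1^{i-1}):1\le i\le a-2\}.\]

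Second, I apply Lemma~\ref{l4} to require that both $\mu$ and $\nu$ admit an $(a-5)$-hook and satisfy $\chi^\mu_{(a-5,3,2,1)}\ne 0$ and $\chi^\nu_{(a-5,3,2,1)}\ne 0$. Scanning the hook lengths of $(a-i,2,1^{i-1})$ shows that, for $a\ge 12$, an $(a-5)$-hook appears exactly when $i\in\{1,2,3,5,a-6,a-4,a-3,a-2\}$. A single Murnaghan--Nakayama step sends each such $\mu$ to a partition of~$6$, and the short table of $\chi^\lambda_{(3,2,1)}$ on $S_6$---in particular the vanishing on $(4,2)$, $(3,2,1)$, $(2,2,1,1)$ and their conjugates---eliminates every candidate except $\mu,\nu\in\{(a+1),(1^{a+1}),(a-5,2,1^4),(6,2,1^{a-7})\}$, whose character values are $1$, $(-1)^{a-5}$, $1$, $(-1)^{a-5}$ respectively.

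Third, because $(a+1)'=(1^{a+1})$, $(a-5,2,1^4)'=(6,2,1^{a-7})$ and $\chi^{\lambda'}_\gamma=\pm\chi^\lambda_\gamma$, I may assume $\mu\in\{(a+1),(a-5,2,1^4)\}$. Each unordered pair $\{\mu,\nu\}$ determines $\beta$ uniquely through its $a$-quotient; the self-conjugate pairs $\{(a+1),(1^{a+1})\}$ and $\{(a-5,2,1^4),(6,2,1^{a-7})\}$ produce the self-conjugate $\beta=(a+1,1^a)$ and $\beta=(a-5,7,3,2^4,1^{a-12})$, both excluded by Lemma~\ref{l6}. The non-self-conjugate cases leave two conjugation classes with representatives $\beta_1:=(a+1,a-4,1^4)$ (from $\nu=(a-5,2,1^4)$) and $\beta_2:=(a+1,7,1^{a-7})$ (from $\nu=(6,2,1^{a-7})$). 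A final Murnaghan--Nakayama expansion at the two $a$-hooks $(2,1)$ and $(1,3)$ of each $\beta_i$, with legs $(4,1)$ for $\beta_1$ and $(a-7,1)$ for $\beta_2$, then yields $\chi^{\beta_1}_\gamma=1-1=0$ and $\chi^{\beta_2}_\gamma=(-1)^{a-7}-(-1)^{a-5}=0$.

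The main obstacle will be the bookkeeping of the middle step: confirming that the eight values of $i$ give all $(a-i,2,1^{i-1})$ with an $(a-5)$-hook for $a\ge 12$, accurately evaluating each $\chi^\mu_{(a-5,3,2,1)}$, and recognising the non-obvious self-conjugate $\beta=(a-5,7,3,2^4,1^{a-12})$, whose column lengths $(a-5,7,3,2,2,2,2,1,\ldots,1)$ reproduce its row lengths. The hypothesis $a\ge 12$ is used throughout to keep the four relevant removals genuinely distinct, to ensure no $(a-5)$-hook arises in any $(a-i,2,1^{i-1})$ with $i\in\{4\}\cup\{6,\ldots,a-7\}\setminus\{a-6\}$, and to guarantee that the trailing $1$-block of $\beta=(a-5,7,3,2^4,1^{a-12})$ is non-negative.
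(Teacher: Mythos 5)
Your proposal is correct and follows essentially the same route as the paper: reduce via Lemma \ref{l6} to non-self-conjugate $\beta\vdash 2a+1$ with two $a$-hooks and $a$-core $(1)$, use Lemma \ref{l4} and the vanishing of $\chi^\lambda_{(3,2,1)}$ on $(4,2)$, $(3,2,1)$, $(2,2,1,1)$ and their conjugates to cut the candidates for $\mu,\nu$ down to $\{(a+1),(1^{a+1}),(a-5,2,1^4),(6,2,1^{a-7})\}$, discard the conjugate pairs as giving self-conjugate $\beta$, and verify $\chi^\beta_\gamma=0$ for $\beta=(a+1,a-4,1^4)$ and $\beta=(a+1,7,1^{a-7})$. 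The hook-length bookkeeping (the admissible $i$, the leg lengths, and the signs) all checks out against the paper's computation.
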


\begin{proof}
As $|(a,a-5,3,2,1)|=2a+1$ we only need to consider not self-adjoint $\beta\vdash 2a+1$ with two $a$-hooks from Lemma \ref{l6}. Also in this case $\beta_{(a)}=1$, so that if $\mu$ and $\nu$ are obtained from $\beta$ by removing an $a$-hook and they have an $(a-5)$-hook (as we can apply Lemma \ref{l4}), we have that
\begin{eqnarray*}
\mu,\nu&\in&\{(a+1),(1^{a+1}),(a-1,2),(a-2,2,1),(a-3,2,1^2),(a-5,2,1^4),\\
&&\hspace{12pt}(6,2,1^{a-7}),(4,2,1^{a-5}),(3,2,1^{a-4}),(2^2,1^{a-3})\}.
\end{eqnarray*}
From $a-5>4$ and then
\begin{eqnarray*}
\chi^{(a-1,2)}_{(a-5,3,2,1)},\chi^{(4,2,1^{a-5})}_{(a-5,3,2,1)}&=&\pm\chi^{(4,2)}_{(3,2,1)}=0,\\
\chi^{(a-2,2,1)}_{(a-5,3,2,1)},\chi^{(3,2,1^{a-4})}_{(a-5,3,2,1)}&=&\pm\chi^{(3,2,1)}_{(3,2,1)}=0,\\
\chi^{(a-3,2,1^2)}_{(a-5,2,1^2)},\chi^{(2^2,1^{a-3})}_{(a-5,3,2,1)}&=&\pm\chi^{(2,2,1,1)}_{(3,2,1)}=0
\end{eqnarray*}
we can assume that
\[\mu,\nu\in\{(a+1),(1^{a+1}),(a-5,2,1^4),(6,2,1^{a-7})\}\]
and then again that $\mu=(a+1)$ and $\nu\in\{(a-5,2,1^4),(6,2,1^{a-7})\}$, so $\beta\in\{(a+1,a-4,1^4),(a+1,7,1^{a-7})\}$. From $a>11$ we have that
\begin{eqnarray*}
\chi^{(a+1,a-4,1^4)}_{(a,a-5,3,2,1)}&\!\!\!=\!\!\!&\chi^{(a+1)}_{(a-5,3,2,1)}-\chi^{(a-5,2,1^4)}_{(a-5,3,2,1)}=1+\chi^{(1^6)}_{(3,2,1)}=0,\\
\chi^{(a+1,7,1^{a-7})}_{(a,a-5,3,2,1)}&\!\!\!=\!\!\!&(-1)^{a-7}\chi^{(a+1)}_{(a-5,3,2,1)}-\chi^{(6,2,1^{a-7})}_{(a-5,3,2,1)}=(1-1)(-1)^{a-7}\chi^{(6)}_{(3,2,1)}=0.
\end{eqnarray*}
In particular $(a,a-5,3,2,1)$ is an $A_n$-sign partition.
\end{proof}

\begin{theor}
If $b+1\leq a\leq 2b-2$ then $(a,b,a-b+1)$ is an $A_n$-sign partition if it does not consists of odd distinct parts.
\end{theor}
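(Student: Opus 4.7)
The plan is to apply Lemma \ref{l6}: since $|\gamma|=2a+1<3a$, any $\beta\vdash 2a+1$ has at most two $a$-hooks and $a$-core $(1)$, so it suffices to show $\chi^\beta_\gamma\in\{0,\pm 1\}$ for each non-self-conjugate $\beta$ with exactly two $a$-hooks. The hypothesis $a\leq 2b-2$ forces $b>a-b+1$, which puts $(b,a-b+1)\in\Sign$ and allows Lemma \ref{l4} to apply: the two partitions $\mu,\nu\vdash a+1$ obtained from $\beta$ by removing an $a$-hook must both satisfy $\chi^\mu_{(b,a-b+1)}\neq 0$.

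The partitions of $a+1$ with $a$-core $(1)$ are $(a+1)$, $(1^{a+1})$ and $(a-i,2,1^{i-1})$ for $1\leq i\leq a-2$. By Murnaghan--Nakayama, $\chi^\mu_{(b,a-b+1)}\neq 0$ requires a $b$-hook in $\mu$ whose removal leaves a hook partition of size $a-b+1$; inspecting the hook lengths of $(a-i,2,1^{i-1})$ shows this forces $i=a-b$ or $i=b-1$. Hence $\mu,\nu\in\{(a+1),(1^{a+1}),(b,2,1^{a-b-1}),(a-b+1,2,1^{b-2})\}$. Since $(a+1)'=(1^{a+1})$ and $(b,2,1^{a-b-1})'=(a-b+1,2,1^{b-2})$, these are two conjugate pairs, and as $\beta$ is not self-conjugate we have $\mu\neq\nu'$; passing to $\beta'$ if necessary, we may assume $\mu=(a+1)$ and $\nu\in\{(b,2,1^{a-b-1}),(a-b+1,2,1^{b-2})\}$.

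Reconstructing $\beta$ from the pair $(\mu,\nu)$ via the $a$-quotient, as in the previous two theorems, gives $\beta=(a+1,b+1,1^{a-b-1})$ in the first case and $\beta=(a+1,a-b+2,1^{b-2})$ in the second; each has its two $a$-hooks at $(1,3)$ and $(2,1)$. Combining $\chi^{(a+1)}_{(b,a-b+1)}=1$, $\chi^{(b,2,1^{a-b-1})}_{(b,a-b+1)}=(-1)^{a-b+1}$ and $\chi^{(a-b+1,2,1^{b-2})}_{(b,a-b+1)}=(-1)^{b}$ with the leg parities of these two $a$-hooks ($1$ and $a-b-1$, respectively $1$ and $b-2$) shows that the two Murnaghan--Nakayama contributions cancel, giving $\chi^\beta_\gamma=0$ in both cases. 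The main obstacle is the preliminary case analysis eliminating the $\mu=(a-i,2,1^{i-1})$ with $i\neq a-b,b-1$, together with the sign bookkeeping for the rim-hook removals needed to see the final cancellation.
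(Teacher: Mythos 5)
Your proposal is correct and follows essentially the same route as the paper: reduce via Lemma \ref{l6} to non-self-conjugate $\beta\vdash 2a+1$ with two $a$-hooks, use Lemma \ref{l4} and the vanishing of $\chi^{(a-i,2,1^{i-1})}_{(b,a-b+1)}$ for the excluded $i$ to pin $\mu,\nu$ down to the two conjugate pairs, discard $\mu=\nu'$ by self-conjugacy of $\beta$, reconstruct $\beta\in\{(a+1,b+1,1^{a-b-1}),(a+1,a-b+2,1^{b-2})\}$, and verify the Murnaghan--Nakayama cancellation. The character values and leg lengths you quote are correct, so no gap.
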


\begin{proof}
From $b+1\leq a\leq 2b-2$ it follows that $a>b>a-b+1>1$. Again from Lemma \ref{l6} we only need to consider partitions $\beta$ of $|(a,b,a-b+1)|=2a+1$ with two $a$-hooks which are not self-adjoint. From Lemma \ref{l4} in this case $\mu$ and $\nu$ have a $b$-hook, so that
\begin{eqnarray*}
\mu,\nu&\in&\{(a+1),(1^{a+1})\}\cup\{(a-i,2,1^{i-1}):1\leq i\leq a-b-2,\,\,i=a-b,\\
&&\hspace{12pt}i=b-1\mbox{ or }b+1\leq i\leq a-2\}.
\end{eqnarray*}
From $b>a-b+1$ it follows that any partition of $a+1$ has at most one $b$-hook, so that
\[\chi^{(a-i,2,1^{i-1})}_{(b,a-b+1)}=\chi^{(a-b-i,2,1^{i-1})}_{(a-b+1)}=0\]
for $1\leq i\leq a-b-2$ and similarly for $b+1\leq i\leq a-2$. As in the previous theorems we can assume that $\mu=(a+1)$ and $\nu\in\{(b,2,1^{a-b-1}),(a-b+1,2,1^{b-2})\}$, that is $\beta\in\{(a+1,b+1,1^{a-b-1}),(a+1,a-b+2,1^{b-2})\}$. As 
\begin{eqnarray*}
\chi^{(a+1,b+1,1^{a-b-1})}_{(a,b,a-b+1)}&\!\!\!\!=\!\!\!\!&(-1)^{a-b-1}\chi^{(a+1)}_{(b,a-b+1)}\!-\!\chi^{(b,2,1^{a-b-1})}_{(b,a-b+1)}\!=\!(-1)^{a-b-1}\!+\!\chi^{(1^{a-b+1})}_{(a-b+1)}\!=\!0,\\
\chi^{(a+1,a-b+2,1^{b-2})}_{(a,b,a-b+1)}&\!\!\!\!=\!\!\!\!&(-1)^{b-2}\chi^{(a+1)}_{(b,a-b+1)}\!-\!\chi^{(a-b+1,2,1^{b-2})}_{(b,a-b+1)}\!=\!(-1)^{b-2}(1\!-\!\chi^{(a-b+1)}_{(a-b+1)})\!=\!0
\end{eqnarray*}
we have that $(a,b,a-b+1)$ is an $A_n$-sign partition if it does not consists of odd distinct parts and $b+1\leq a\leq 2b-2$.
\end{proof}

\begin{theor}
If $b+2\leq a\leq 2b$ then $(a,b,a-b-1,1)$ is an $A_n$-sign partition if it does not consists of odd distinct parts.
\end{theor}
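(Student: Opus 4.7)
The plan is to mirror the strategy of the three preceding theorems. Since $|(a,b,a-b-1,1)|=2a$, Lemma \ref{l6} reduces the problem to checking that $\chi^\beta_\gamma\in\{0,\pm1\}$ for every non-self-conjugate $\beta\vdash 2a$ possessing two $a$-hooks. Any such $\beta$ has empty $a$-core, so each of the two partitions $\mu,\nu\vdash a$ obtained by removing an $a$-hook from $\beta$ is itself a hook shape $(a-i,1^i)$ for some $0\le i\le a-1$. By Lemma \ref{l4} each of $\mu,\nu$ must also carry a $b$-hook; since the hook lengths of $(a-i,1^i)$ are $\{a\}\cup\{1,\ldots,a-i-1\}\cup\{1,\ldots,i\}$, this restricts $i$ to the disjoint union $\{0,\ldots,a-b-1\}\cup\{b,\ldots,a-1\}$, the disjointness coming from $a\leq 2b$.

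Stripping the unique $b$-hook from $(a-i,1^i)$ leaves a hook shape of size $a-b$, and the character at cycle type $(a-b-1,1)$ vanishes on every hook shape of size $a-b$ except the full row $(a-b)$ and the full column $(1^{a-b})$. Hence $\chi^\mu_{(b,a-b-1,1)}\neq 0$ only when
\[
\mu\in\{(a),\,(1^a),\,(a-b,1^b),\,(b+1,1^{a-b-1})\},
\]
with respective values $1$, $(-1)^{a-1}$, $(-1)^{b-1}$, $(-1)^{a-b}$. Conjugation pairs these four partitions as $(a)\leftrightarrow(1^a)$ and $(a-b,1^b)\leftrightarrow(b+1,1^{a-b-1})$, so whenever $\{\mu,\nu\}$ is itself a conjugate pair the corresponding $\beta$ is self-conjugate and can be discarded. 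In every other case at most one character on the list is involved unless both $\mu,\nu$ lie in it; in that remaining case I may assume, after replacing $\beta$ by $\beta'$ if needed, that $\mu=(a)$ and $\nu\in\{(a-b,1^b),(b+1,1^{a-b-1})\}$. Reconstructing $\beta$ from this data as in the previous theorems pins it down to
\[
\beta\in\{(a,a-b+1,1^{b-1}),\,(a,b+2,1^{a-b-2})\}.
\]

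It then remains to apply the Murnaghan--Nakayama rule directly. Both candidate $\beta$'s carry their two $a$-hooks at $(1,2)$ and $(2,1)$, with leg lengths equal to $1$ and respectively $b-1$ or $a-b-2$. Plugging in the character values above, the two contributions come with opposite signs and cancel, yielding $\chi^\beta_\gamma=0$ exactly as in the preceding theorems. I expect the main technical nuisance to be the degenerate endpoints of the range $b+2\leq a\leq 2b$: when $a=2b$ the partition $(a,a-b+1,1^{b-1})$ coincides with $(a,b+1,1^{a-b-1})$ (the $\beta$ which, for $a<2b$, would carry a pair with only one non-vanishing contribution), and when $a=b+2$ the second candidate degenerates to $(a,a)$ since $a-b-2=0$. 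Both boundary cases require a separate sign check, but the cancellation is robust and still produces $\chi^\beta_\gamma=0$, completing the verification via Lemma \ref{l6}.
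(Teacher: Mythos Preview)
Your proposal is correct and follows essentially the same route as the paper's own proof: the same reduction via Lemma~\ref{l6} and Lemma~\ref{l4}, the same list $\{(a),(1^a),(a-b,1^b),(b+1,1^{a-b-1})\}$ of surviving $\mu$'s, the same two candidate $\beta$'s, and the same Murnaghan--Nakayama cancellation. Your extra remarks on the hook-length set of $(a-i,1^i)$, on uniqueness of the $b$-hook via $a\le 2b$, and on the boundary cases $a=2b$ and $a=b+2$ are all correct and simply make explicit points the paper passes over.
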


\begin{proof}
Here $a>b>a-b-1\geq 1$, as $b+2\leq a\leq 2b$. From Lemma \ref{l6} we only need to compute $\chi^\beta_{(a,b,a-b-1,1)}$ for $\beta\vdash 2a$ not self-adjoint with two $a$-hook. If $\mu$ and $\nu$ are obtained from such a $\beta$ by removing an $a$-hook then $\mu$ and $\nu$ are $a$-hooks. From $a\leq 2b$ it follows that $\mu$ and $\nu$ have at most one $b$-hook. If such a $b$-hook exists then
\[\mu,\nu\in\{(a-i,1^i):0\leq i\leq a-b-1\mbox{ or }b\leq i\leq a-1\}.\]
For $1\leq i\leq a-b-2$
\[\chi^{(a-i,1^i)}_{(b,a-b-1,1)}=\chi^{(a-i-b,1^i)}_{(a-b-1,1)}=0\]
and similarly for $b+1\leq i\leq a-2$. So from Lemma \ref{l4} we can assume that
\[\mu,\nu\in\{(a),(b+1,1^{a-b-1}),(a-b,1^b),(1^a)\}.\]
Also here we can assume $\mu=(a)$ and $\nu\in\{(b+1,1^{a-b-1}),(a-b,1^b)\}$, that is $\beta\in\{(a,b+2,1^{a-b-2}),(a,a-b+1,1^{b-1})\}$. From
\begin{eqnarray*}
\chi^{(a,b+2,1^{a-b-2})}_{(a,b,a-b-1,1)}&\!\!\!\!=\!\!\!\!&(-1)^{a-b-2}\chi^{(a)}_{(b,a-b-1,1)}\!-\!\chi^{(b+1,1^{a-b-1})}_{(b,a-b-1,1)}\!=\!(-1)^{a-b}\!-\!\chi^{(1^{a-b})}_{(a-b-1,1)}\!=\!0,\\
\chi^{(a,a-b+1,1^{b-1})}_{(a,b,a-b-1,1)}&\!\!\!\!=\!\!\!\!&(-1)^{b-1}\chi^{(a)}_{(b,a-b-1,1)}\!-\!\chi^{(a-b,1^b)}_{(b,a-b-1,1)}\!=\!(-1)^{b-1}(1\!-\!\chi^{(a-b)}_{(a-b-1,1)})\!=\!0
\end{eqnarray*}
it follows that $(a,b,a-b-1,1)$ is an $A_n$-sign partition if it does not consists of odd distinct parts.
\end{proof}

\section*{Acknowledgements}

The author thanks Christine Bessenrodt and Michael Cuntz for questions and discussion about sign conjugacy classes of $A_n$, which lead to writing this paper.

\end{document}